\documentclass[fleqn, reqno]{amsart}

\usepackage{amsmath,amsthm,amsfonts,latexsym,euscript,amssymb,amscd}

\setlength{\baselineskip}{1.5\baselineskip}

\newtheorem{theorem}{Theorem}[section]
\newtheorem{corollary}[theorem]{Corollary}
\newtheorem{lemma}[theorem]{Lemma}
\newtheorem{proposition}[theorem]{Proposition}

\theoremstyle{definition}
\newtheorem{notation}[theorem]{Notation}
\newtheorem{definition}[theorem]{Definition}
\newtheorem{remark}[theorem]{Remark}
\newtheorem{remarks}[theorem]{Remarks}
\newtheorem{example}[theorem]{Example}
\newtheorem{examples}[theorem]{Examples}
\newtheorem{counter example}[theorem]{Counter Example}
\newtheorem{question}[theorem]{Question}

\newtheorem{problem}[theorem]{Problem}
\newtheorem{problems}[theorem]{Problems}
\newtheorem{conjecture}[theorem]{Conjecture}

\newcommand{\bgth}{\begin{theorem}}
\newcommand{\ndth}{\end{theorem}}
\newcommand{\bgthm}{\begin{theorem}}
\newcommand{\ndthm}{\end{theorem}}
\newcommand{\bgcor}{\begin{corollary}}
\newcommand{\ndcor}{\end{corollary}}
\newcommand{\bglm}{\begin{lemma}}
\newcommand{\ndlm}{\end{lemma}}
\newcommand{\bgprop}{\begin{proposition}}
\newcommand{\ndprop}{\end{proposition}}

\newcommand{\bgdf}{\begin{definition}}
\newcommand{\nddf}{\end{definition}}
\newcommand{\bgnota}{\begin{notation}}
\newcommand{\ndnota}{\end{notation}}
\newcommand{\bgrmk}{\begin{remark}}
\newcommand{\ndrmk}{\end{remark}}
\newcommand{\bgrmks}{\begin{remarks}}
\newcommand{\ndrmks}{\end{remarks}}
\newcommand{\bgexm}{\begin{example}}
\newcommand{\ndexm}{\end{example}}
\newcommand{\bgexms}{\begin{examples}}
\newcommand{\ndexms}{\end{examples}}

\newcommand{\bgques}{\begin{question}}
\newcommand{\ndques}{\end{question}}
\newcommand{\bgquess}{\begin{question}}
\newcommand{\ndquess}{\end{question}}
\newcommand{\bgprob}{\begin{problem}}
\newcommand{\ndprob}{\end{problem}}
\newcommand{\bgprobs}{\begin{problems}}
\newcommand{\ndprobs}{\end{problems}}
\newcommand{\bgconj}{\begin{conjecture}}
\newcommand{\ndconj}{\end{conjecture}}

\newcommand{\bgeq}{\begin{eqnarray}}
\newcommand{\ndeq}{\end{eqnarray}}
\newcommand{\bgeqq}{\begin{eqnarray*}}
\newcommand{\ndeqq}{\end{eqnarray*}}



\def\CC{{\mathbb C}}
\def\RR{{\mathbb R}}

\def\tr{{\rm tr}}

\numberwithin{equation}{section}

\newcommand{\lmref}[1]{Lemma~\ref{#1}}
\newcommand{\propref}[1]{Proposition~\ref{#1}}

\newcommand{\thmref}[1]{Theorem~\ref{#1}}

%
%

\title[Factorization Property for Quantum Groups]
{Kirchberg's Factorization Property for \\ 
Discrete Quantum Groups}
\author[Angshuman Bhattacharya and Shuzhou Wang]
{Angshuman Bhattacharya and Shuzhou Wang}



\address{Department of Mathematics, University of Georgia,
Athens, GA 30602
\newline \indent
Fax: 706-542-2573; Tel: 706-542-0884
}
\email{angshu@uga.edu, szwang@uga.edu}

\subjclass[2010]{Primary 46L06, 46L09, 46L87, 46L89;  
Secondary 16T05, 20G42, 81R50}

\keywords{factorization property, tensor products of $C^*$-algebras, compact quantum groups, discrete quantum groups, 
universal unitary quantum groups, universal orthogonal quantum groups}

\begin{document}

\begin{abstract}
We show that the discrete duals of the universal unitary quantum groups $U_n^+$ and orthogonal quantum groups
$O_n^+$ have Kirchberg's factorization property when $n \neq 3$. 

\end{abstract}

\maketitle

\section{Introduction}
In \cite{Tak64}, 
using left $\lambda$ and right $\rho$ regular  representations of the free group on two generators ${\mathbb F}_2$, 
Takesaki discovered that the product representation of the algebraic tensor product 
of the {\em reduced} algebras $\lambda(C^*({\mathbb F}_2)) \otimes_{\rm alg} \rho(C^*({\mathbb F}_2))$   
in $B(\ell^2({\mathbb F}_2))$, defined on simple tensors by 
$$
\lambda(C^*({\mathbb F}_2)) \otimes_{\rm alg} \rho(C^*({\mathbb F}_2))  \ni 
\lambda(a) \otimes \rho(b) \longmapsto \lambda(a)  \rho(b) \in  B(\ell^2({\mathbb F}_2)), 
\; \; 
$$
is {\em unbounded} for the minimun $C^*$-tensor norm, 
thus giving rise to a $C^*$-norm on  this tensor product 
that is different from the minimum norm. More than ten years later, 
replacing the reduced group $C^*$-algebra of ${\mathbb F}_2$ by the full group $C^*$-algebra, 
 Wassermann \cite{SWass76} showed that the product representation $(\lambda \cdot \rho)_{\rm alg}$ of 
the algebraic tensor product  
 $C^*({\mathbb F}_2) \otimes_{\rm alg} C^*({\mathbb F}_2)$, defined on simple tensors by,  
 $$
(\lambda \cdot \rho)_{\rm alg}: 
C^*({\mathbb F}_2) \otimes_{\rm alg} C^*({\mathbb F}_2) \ni a \otimes b  
\longmapsto \lambda(a)  \rho(b) \in  B(\ell^2({\mathbb F}_2)),  
 \; \;  
 $$ 
 is {\em bounded} for the corresponding 
 minimun $C^*$-tensor norm; that is, the representation $(\lambda \cdot \rho)_{\rm alg}$ 
 has a ({\em bounded}) extension  to the minimal $C^*$-tensor product 
$$
 (\lambda \cdot \rho)_{\rm min}:  C^*({\mathbb F}_2) \otimes_{\rm min} C^*({\mathbb F}_2) \rightarrow B(\ell^2({\mathbb F}_2)). 
$$ 
 
Motivated by these results, Kirchberg \cite{Kirch94} 
defined that a discrete group $\Gamma$ has {\em the factorization property} if 
the representation $(\lambda \cdot \rho)_{\rm alg}$ of 
$C^*(\Gamma) \otimes_{\rm alg} C^*(\Gamma)$, defined as above with ${\mathbb F}_2$ replaced by 
$\Gamma$, has a  ({\em bounded}) extension to the minimal $C^*$-tensor product
\begin{eqnarray} \label{factrep}
(\lambda \cdot \rho)_{\rm min}:  C^*(\Gamma) \otimes_{\rm min} C^*(\Gamma) \rightarrow B(\ell^2(\Gamma))
\end{eqnarray}
so that the representation of the maximal $C^*$-tensor product
\begin{eqnarray} \label{maxrep}
(\lambda \cdot \rho)_{\rm max}: C^*(\Gamma) \otimes_{\rm max} C^*(\Gamma) \rightarrow B(\ell^2(\Gamma))
\end{eqnarray}
admits a {\em factorization} of the form 
$(\lambda \cdot \rho)_{\rm max} = (\lambda \cdot \rho)_{\rm min} \circ \pi$, where 
\begin{eqnarray}
\pi: C^*(\Gamma) \otimes_{\rm max} C^*(\Gamma) \rightarrow C^*(\Gamma) \otimes_{\rm min} C^*(\Gamma)
\end{eqnarray}
is the canonical surjection. 
 Kirchberg showed in \cite{Kirch94} that $\Gamma$ has 
the factorization property if and only if the Plancherel trace 
on $C^*(\Gamma)$ is amenable, extending Connes 
\cite{ConInj} (see Section 2 below for details on amenable traces), and 
that a group with Kazhdan's property T has the factorization property if and only if it is residually finite. 
So far little is known about this mysterious class of groups beyond Kirchberg's fundamental work, except 
the counter examples in \cite{Thom14}.

In \cite{Wor87b}, 
Woronowicz introduced the abstract theory of compact quantum groups that includes 
compact groups, Kac type compact quantum groups 
\cite{EnSch}, and the well known deformations of compact Lie groups \cite{Wor87a,Wor88a}.  
Borrowing the notation from compact abelian groups, 
for a compact quantum group $G$, 
we use the notation $C(G) = C^*(\hat{G})$, where $\hat{G}$ is the discrete quantum group dual to $G$. 
It is known that all discrete quantum groups can be obtained in this fashion. 
Assume $G$ is a Kac type compact quantum group. 
Then the antipode $\kappa$ on $C(G)$ is bounded and involutive and its Haar state  $h$ is a trace. 
Denote $\hat{G}$ by $\Gamma$ and let $\lambda$ be the GNS representation associated with 
$h$, $\lambda(a) \Lambda_h(b) = \Lambda_h(ab)$, where $a, b \in C^*(\Gamma)= C(G) $, 
$\Lambda_h(b)$ is the image of $b$ in the GNS Hilbert space $\ell^2(\Gamma) := L^2(C(G), h)$.  
Define $\rho(a) \Lambda_h(b)  = \Lambda_h(b \kappa(a))$. 
As in the case of a discrete group, 
we define that a discrete quantum group $\Gamma$ has {\em Kirchberg's factorization property} if 
 the representation $(\lambda \cdot \rho)_{\rm alg}$ of 
 $C^*(\Gamma) \otimes_{\rm alg} C^*(\Gamma) = C(G)  \otimes_{\rm alg} C(G) $ has a (bounded) extension of the form (\ref{factrep}).

Woronowicz's abstract theory led to the discovery of 
a different kind of compact quantum groups, $A_u(n)$ and $A_o(n)$ (cf. \cite{free}), called 
the universal unitary quantum groups and universal orthogonal quantum groups.  
 In \cite{Banica96a, Banica97a}, Banica conducted a deep study of these quantum groups that 
 laid foundation for much of the subsequent work on these objects. 
Banica and his collaborators also invented the notation $U_n^+$ and $O_n^+$ for these 
 quantum groups to denote $A_u(n) = C(U_n^+)$ and $A_o(n) = C(O_n^+)$.
 In this paper, we show that the discrete quantum group duals $\widehat{U_n^+}$ and $\widehat{O_n^+}$ 
 has Kirchberg's factorization property when $n \neq 3$.
 
 This paper is organized as follows. In Section 2, we collect some basic concepts and 
 results for convenience of the reader. {\em All compact quantum groups are assumed to be 
 of Kac type in this paper.} In Section 3, we prove Theorem 3.3 which states 
 that the Haar trace of a compact quantum group is amenable if the quantum group 
 is generated in the sense of Brannan-Collins-Vergnioux \cite{BCV15} 
 by two quantum subgroups with amenable Haar traces. 
 Finally, in Section 4, using Theorem 3.3 and results of Brannan-Collins-Vergnioux \cite{BCV15} and 
 Brown-Dykema \cite{BD04}, we show that for $n \neq 3$, the Haar traces on 
 ${U_n^+}$ and ${O_n^+}$ are amenable, which means equivalently that 
 discrete quantum group duals $\widehat{U_n^+}$ and $\widehat{O_n^+}$ 
  have Kirchberg's factorization property.

\section{Preliminaries}
\label{prelim}

For convenience of the reader 
and to make this paper reasonably self-contained, 
we summarize here several main concepts and results that we will use. 

For simplicity of notation, $A \otimes B$ denotes the minimal tensor product  $A \otimes_{\rm min}  B$ 
of two $C^*$-algebras $A$ and $B$, which is the same as the spatial norm; 
$M_n = M_n(\CC)$ denotes the $n \times n$ complex matrix algebra. 

There are many equivalent definitions of a compact quantum group (see Sect. 2 of \cite{free}). 
The tersest one is probably the following.

\begin{definition} 
{\rm (cf. \cite{Wor98a}) }
A {\bf compact quantum group} (CQG)  is a pair $G= (A, \Delta)$ where 
$A$ is a $C^*$algebra with unit and $\Delta: A \to A \otimes A$ 
is a morphism of unital $C^*$-algebras such that

(1) 
$( \Delta \otimes id)  \Delta  = (  id \otimes  \Delta)  \Delta $; 

(2) 
The vector spaces $(1 \otimes A)  \Delta(A)$
and  $(A \otimes 1)  \Delta(A)$ are dense in $A \otimes A$. 

\end{definition}

The $C^*$-algebra $A$ is also denoted by $C(G)$ and $A_G$. 
We use $\Delta_G$ to denote the coproduct $\Delta$ for $G$ for clarity when necessary.

One of the most important properties of a CQG is the existence of 
a {\bf Haar measure}, also called the {\bf Haar state}, as described in the theorem below. 
It gives rise to the Peter-Weyl theory of a CQG that ensures that the 
vector space ${\mathcal A}_G$ of matrix elements of all finite dimensional representations of $G$ is 
a Hopf $*$-algebra dense in $A$. The antipode $\kappa$ of ${\mathcal A}_G$ is an antihomomorphism 
and is given by $(id \otimes \kappa)(v) = v^{-1}$, where $v$ is an finite dimensional representation of $G$.  

Just as the group algebra of a discrete group, 
the $*$-algebra ${\mathcal A}_G$ has a universal (i.e. maximum) $C^*$-norm 
and a reduced (i.e. minimum) $C^*$-norm. 
 We assume the $C^*$-norm on $A=C(G)$ to be universal. 
The $C^*$-algebra $C(G)$ is also isomorphic to the universal group $C^*$-algebra 
$C^*(\hat{G})$ of the {\bf discrete quantum group dual} $\hat{G}$ of $G$. 
Our focus will be on the algebra $A=C(G)= C^*(\hat{G})$ without 
worrying about details of the duality theory or axiomatic theory of 
discrete quantum groups as found in \cite{PodWor90, EffRuan94,Daele96a}.

\begin{theorem} 
{\rm (Haar measure, cf. \cite{Wor87b, Wor98a})} \label{Haar}
Let $G= (A, \Delta)$ be a CQG.

{\rm (1)} 
 There is a unique state $h$ on $A$ such that for all $a$ in $A$, 
\begin{eqnarray} \label{Haarid}
(id \otimes h ) \Delta(a) = (h \otimes id) \Delta(a) = h(a)1_A. 
\end{eqnarray}
{\rm (2)} 
The state $h$ is a trace
if and only if 
$\kappa$ has a bounded extension 
to a $*$-isomorphism from $A$ to its opposite $C^*$-algebra $A^{\rm op}$ such that 
$\kappa^2(a) = a$ for all $a$ in $ A$.
\end{theorem}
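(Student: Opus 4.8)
The plan is to treat the two assertions by different tools: part (1) from the density axiom alone, and part (2) from the Peter--Weyl theory attached to $h$ (\cite{Wor87b,Wor98a}).

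\emph{Existence of $h$ in (1).} For states $\mu,\nu$ on $A$ put $\mu*\nu:=(\mu\otimes\nu)\circ\Delta$; condition (1) in the definition of a CQG (coassociativity) makes $*$ an associative composition on the weak-$*$ compact convex state space, so every state $\omega$ has convolution powers $\omega^{*k}$. First I would form the Ces\`aro means $\omega_n:=\frac1n\sum_{k=1}^n\omega^{*k}$ and observe the telescoping estimates $\|\omega_n*\omega-\omega_n\|\le 2/n$ and $\|\omega*\omega_n-\omega_n\|\le 2/n$; combined with the separate weak-$*$ continuity of $*$ this forces any weak-$*$ cluster point $h_\omega$ of $(\omega_n)$ to be an idempotent state that absorbs $\omega$ from both sides, $h_\omega*\omega=\omega*h_\omega=h_\omega*h_\omega=h_\omega$, so that the unital completely positive maps $(\mathrm{id}\otimes h_\omega)\Delta$ and $(h_\omega\otimes\mathrm{id})\Delta$ are idempotent. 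The substantive step is to promote this to: if $\omega$ is chosen \emph{faithful}, then $h_\omega$ absorbs every state, equivalently these two idempotent maps have range $\CC\,1_A$ --- which is precisely the invariance identity \eqref{Haarid} for $h:=h_\omega$. This is the point at which condition (2) (density of $(1\otimes A)\Delta(A)$ and $(A\otimes 1)\Delta(A)$) must be used. (A faithful state exists whenever $A$ is separable, the case of interest here; the general case reduces to it.) This is in essence Woronowicz's construction, in the streamlined form due to Van Daele.

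\emph{Uniqueness in (1).} If $h$ and $h'$ both satisfy \eqref{Haarid}, then, writing $\Delta(a)=\sum a_{(1)}\otimes a_{(2)}$ in Sweedler notation, the left-invariance of $h$ gives $(h\otimes h')\Delta(a)=h'\big((h\otimes\mathrm{id})\Delta(a)\big)=h'(h(a)1_A)=h(a)$, while the right-invariance of $h'$ gives $(h\otimes h')\Delta(a)=h\big((\mathrm{id}\otimes h')\Delta(a)\big)=h(h'(a)1_A)=h'(a)$; hence $h=h'$.

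\emph{Part (2).} Here I would use that the matrix coefficients of the finite--dimensional unitary corepresentations span a dense Hopf $*$-subalgebra $\mathcal A_G\subseteq A$ on which $h$ is \emph{faithful}, carrying the counit $\varepsilon$ and the a priori unbounded antipode $\kappa$, together with Woronowicz's entire one--parameter family of characters $(f_z)_{z\in\CC}$ on $\mathcal A_G$ ($f_0=\varepsilon$, $f_z*f_{z'}=f_{z+z'}$) obeying the structural identities
\begin{equation*}
\kappa^2(a)=f_1*a*f_{-1},\qquad h(ab)=h\big(b\,(f_1*a*f_1)\big)\qquad(a,b\in\mathcal A_G).
\end{equation*}
From the second identity and faithfulness of $h$ on $\mathcal A_G$, $h$ is a trace iff $f_1*a*f_1=a$ for all $a$, i.e.\ iff $f_1=\varepsilon$; and by the first identity this is equivalent to $\kappa^2=\mathrm{id}$ on $\mathcal A_G$ --- indeed $\kappa^2$ acts on the matrix $v^\alpha$ of each irreducible corepresentation by conjugation by the positive invertible $F_\alpha$ with $f_1(v^\alpha_{ij})=(F_\alpha)_{ij}$, and by irreducibility together with $\Tr(F_\alpha)=\Tr(F_\alpha^{-1})$ this conjugation is trivial exactly when $F_\alpha=1$. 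When $\kappa^2=\mathrm{id}$, the Hopf--algebra identity $\kappa(\kappa(a^*)^*)=a$ yields $\kappa(a^*)=\kappa(a)^*$ on $\mathcal A_G$, so $a\mapsto\kappa(a)^*$ is a unital $*$-homomorphism of $\mathcal A_G$ into $A$, hence contractive for the universal norm; taking adjoints, $\kappa$ extends to a $*$-isomorphism $A\to A^{\mathrm{op}}$, which squares to the identity by density. Conversely, a bounded involutive $*$-isomorphism extending $\kappa$ forces $\kappa^2=\mathrm{id}$ on $\mathcal A_G$, hence (running the computation in reverse) $f_z=\varepsilon$ for all $z$, hence $h(ab)=h(ba)$.

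\emph{Main obstacle.} The real difficulty is concentrated in the existence half of (1): extracting from the purely algebraic density condition (2) the analytic fact that the averaged idempotent state is two-sided absorbing (equivalently, that $(\mathrm{id}\otimes h_\omega)\Delta$ has one--dimensional range). Part (2) is comparatively formal once part (1) is available, but it is not self-contained --- it presupposes the orthogonality relations and the character family $(f_z)$, i.e.\ a substantial portion of the Peter--Weyl theory of $G$.
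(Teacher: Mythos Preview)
The paper does not give its own proof of this theorem: it is quoted in the Preliminaries as a known result, with attribution \emph{(cf.\ \cite{Wor87b,Wor98a})}, and no argument is supplied. So there is nothing to compare your attempt against on the paper's side.

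That said, your strategy is sound and is precisely the standard route taken in the cited references (with Van Daele's Ces\`aro simplification for existence). A couple of small remarks. First, your parenthetical about faithful states and separability is the one genuinely soft spot in part (1); in the nonseparable case one typically works instead with an arbitrary state $\omega$, shows that any weak-$*$ cluster point $h_\omega$ of the Ces\`aro means is idempotent and dominates $\omega$ in the absorption ordering, and then uses the density axiom to prove that \emph{any} idempotent state which absorbs a faithful family of states is the Haar state --- so the reduction to the separable case is not actually needed. Second, in part (2) your boundedness argument is correct but the phrase ``taking adjoints'' is doing real work: what you are using is that $A^{\mathrm{op}}$ is the universal $C^*$-completion of $\mathcal A_G^{\mathrm{op}}$ (equivalently, that the universal norm is invariant under passage to the opposite algebra, because $\pi\mapsto \overline{\pi}$ on the conjugate Hilbert space is a bijection between $*$-representations of $\mathcal A_G$ and of $\mathcal A_G^{\mathrm{op}}$). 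With that observation the $*$-antihomomorphism $\kappa:\mathcal A_G\to\mathcal A_G$ extends directly to a $*$-isomorphism $A\to A^{\mathrm{op}}$, and density gives $\kappa^2=\mathrm{id}$ on $A$.
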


When (2) of the above holds, we say $G$ is a CQG of {\bf Kac type}, or 
$A$ is a compact type {\bf Kac algebra} \cite{EnSch}, and 
we continue to use  $\kappa$ to denote the extended anti-isomorphism. 
The Haar state will be also called the {\bf Haar trace} in this case.
It will be the main object of study in this paper. 

Quantum groups in this paper will be of Kac type unless otherwise stated. 

\begin{examples} (cf. \cite{free}) Let $n$ be a natural number. 

The {\bf universal unitary quantum group $U_n^+$} has quantum function algebra 
$A_u(n)=C(U_n^+)$ defined by generators and relations as follows: 
\begin{eqnarray}
C(U_n^+) = C^*\{ u_{ij} \; | \; 
 u \bar{u}^t = I_n = \bar{u}^t u, u^t \bar{u} = I_n = \bar{u} u^t \}
\end{eqnarray}
where $u = (u_{ij})$, $u^t = (u_{ji})$, $\bar{u}= (u^*_{ij})$,  as elements in the 
$n \times n$ matrix algebra $M_n(C(U_n^+)) = M_n \otimes C(U_n^+)$ with entries in $C(U_n^+)$. 

The {\bf universal orthogonal quantum group $O_n^+$} has quantum function algebra 
$A_o(n)=C(O_n^+)$ defined by generators and relations as follows: 
\begin{eqnarray}
C(O_n^+) = C^*\{ u_{ij} \; | \;  u u^t = I_n = u^t u, \bar{u} = u \}
\end{eqnarray}

The coproduct for both $C(U_n^+)$ and $C(O_n^+)$ are given by the same formula 

\begin{eqnarray}
\Delta(u_{ij}) = \sum_{k=1}^n u_{ik} \otimes u_{kj}.
\end{eqnarray}

Both $U_n^+$ and $O_n^+$ are CQGs of Kac type with the antipodes 
$\kappa$ defined by 

\begin{eqnarray}
\kappa(u_{ij}) = u_{ji}^*.  
\end{eqnarray}
\end{examples}

\begin{definition} 
A {\bf quantum subgroup} {\rm (cf. \cite[2.13]{free})}
of $ G $ is defined to be a pair $(K, \pi)$ where $K$ is a CQG and 
$\pi: C(G) \to C(K)$ is a surjection such that $(\pi \otimes \pi)\Delta_G = \Delta_{K} \pi $. 

According to Brannan, Collins and Vergnioux {\rm \cite[Definition 4]{BCV15}}, 
we say {\bf $G$ is generated by its quantum subgroups $(G_1, \pi_1)$ and $(G_2, \pi_2)$}, denoted 
$G =  \langle  G_1, G_2 \rangle $, if 
$$
{\rm Mor}_{G}(u, v) = {\rm Mor}_{G_1}(\pi_1(u), \pi_1(v)) \cap {\rm Mor}_{G_2}(\pi_2(u), \pi_2(v))
$$
for every pair of finite dimensional representations $u, v$ of $G$,  
where 
$
{\rm Mor}_{G}(u, v)
$
 is the linear space of intertwiners from $u$ to $v$ \cite{Wor87b,Wor88a}, and 
$\pi_i(u)$ denotes, by abuse of notation,  the representation $(id \otimes \pi_i)(u)$ 
of $G_i$ obtained as restriction of $u$ to $G_i$, $i=1,2$. 
This notion is also implicit in the proof of Theorem 3.1 in Chirvasitu {\rm \cite{Chir15}}. 

For continuous functionals  $\varphi, \psi$ on $C(G)$, $\varphi \star \psi$ denotes their {\bf convolution} 
defined by $\varphi \star \psi = (\varphi \otimes \psi) \Delta$, and 
$\varphi^{\star{ k}} = \varphi \star ... \star \varphi$ ($k$ times). 
\end{definition}

\begin{proposition}  \label{GeneratedQG}
{\rm (\cite[Proposition 3.5]{BCV15})}
Let $(G_1, \pi_1)$ and  $(G_2, \pi_2)$ be quantum subgroups of $G$. 
Let $h_G$ and $ h_{G_i}$ be  Haar states for $G$ and ${G_i}$ and let  
$\tau_i = h_{G_i} \pi_i$, $i=1,2$. Then the following are equivalent. 

{\rm (1)} $G =  \langle  G_1, G_2 \rangle $.
 
{\rm (2) } $\displaystyle h_G = \lim_{k \to \infty} (\tau_1 \star \tau_2)^{\star { k}}$ pointwise on $C(G)$. 

\end{proposition}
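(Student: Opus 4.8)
The plan is to prove Proposition~\ref{GeneratedQG} by exploiting the Peter--Weyl decomposition of $C(G)$ and reducing the statement to a question about the block-diagonal action of the convolution operators on matrix coefficients. Write $\mathrm{Irr}(G)$ for the set of (equivalence classes of) irreducible representations of $G$; by Peter--Weyl the dense Hopf $*$-algebra $\mathcal{A}_G$ decomposes as $\bigoplus_{\alpha \in \mathrm{Irr}(G)} C(G)_\alpha$, where $C(G)_\alpha = \mathrm{span}\{u^\alpha_{ij}\}$ is the $n_\alpha^2$-dimensional space of matrix coefficients of $\alpha$. A state (or continuous functional) $\varphi$ on $C(G)$ restricts to each $C(G)_\alpha$, and the key observation is that convolution is \emph{compatible with this grading}: $(\varphi \star \psi)(u^\alpha_{ij}) = \sum_k \varphi(u^\alpha_{ik})\psi(u^\alpha_{kj})$, so if we record $\varphi$ on $C(G)_\alpha$ as a matrix $\widehat{\varphi}(\alpha) \in M_{n_\alpha}$ by $\widehat{\varphi}(\alpha)_{ij} = \varphi(u^\alpha_{ij})$, then $\widehat{\varphi \star \psi}(\alpha) = \widehat{\varphi}(\alpha)\,\widehat{\psi}(\alpha)$ and in particular $\widehat{\varphi^{\star k}}(\alpha) = \widehat{\varphi}(\alpha)^k$. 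The Haar state $h_G$ corresponds to $\widehat{h_G}(\alpha) = 0$ for $\alpha$ nontrivial and $\widehat{h_G}(\alpha) = 1$ (the scalar) for $\alpha$ trivial. Thus condition~(2) says precisely that for every $\alpha \neq \mathrm{triv}$, the matrix power $\big(\widehat{\tau_1}(\alpha)\widehat{\tau_2}(\alpha)\big)^k \to 0$ as $k \to \infty$.

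Next I would identify $\widehat{\tau_i}(\alpha)$ more structurally. Since $\tau_i = h_{G_i} \circ \pi_i$, the matrix $\widehat{\tau_i}(\alpha)$ is the orthogonal projection onto a distinguished subspace of the representation space $H_\alpha$ of $\alpha$: namely, $\pi_i(u^\alpha)$ is a (generally reducible) representation of $G_i$, and $h_{G_i}$ applied to its matrix coefficients produces exactly the projection onto its $G_i$-invariant vectors, i.e. $P_i := \widehat{\tau_i}(\alpha)$ is the orthogonal projection of $H_\alpha$ onto $\mathrm{Mor}_{G_i}(\mathbf{1}, \pi_i(u^\alpha)) \subseteq H_\alpha$, equivalently onto $\{\xi \in H_\alpha : \pi_i(u^\alpha)(\xi \otimes 1) = \xi \otimes 1\}$. (Both $P_1, P_2$ are self-adjoint idempotents since $h_{G_i}$ is a trace on a Kac-type quantum group; this is where the Kac hypothesis enters.) So condition~(2) becomes: for every nontrivial $\alpha$, $(P_1 P_2)^k \to 0$. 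A standard Hilbert-space lemma (the same one behind von Neumann's alternating-projections theorem and Kirchberg's analysis) gives that for two orthogonal projections $P_1, P_2$ on a finite-dimensional space, $(P_1 P_2)^k \to Q$, the orthogonal projection onto $\mathrm{ran}(P_1) \cap \mathrm{ran}(P_2)$; hence $(P_1P_2)^k \to 0$ if and only if $\mathrm{ran}(P_1) \cap \mathrm{ran}(P_2) = 0$.

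The final step is to match $\mathrm{ran}(P_1) \cap \mathrm{ran}(P_2)$ with the intertwiner condition defining $G = \langle G_1, G_2\rangle$. By Frobenius reciprocity / the tensor identity for Woronowicz algebras, $\mathrm{ran}(P_i)$ — the space of $G_i$-fixed vectors in $\pi_i(u^\alpha)$ — is naturally identified with $\mathrm{Mor}_{G_i}(\mathbf{1}, \pi_i(u^\alpha))$, and the condition that this vanish for all nontrivial $\alpha$ is equivalent, by a now-routine argument using that every representation appearing in $u^\alpha \otimes \overline{u^\beta}$ is a subrepresentation, to the statement $\mathrm{Mor}_G(u,v) = \mathrm{Mor}_{G_1}(\pi_1 u, \pi_1 v) \cap \mathrm{Mor}_{G_2}(\pi_2 u, \pi_2 v)$ for all finite-dimensional $u,v$: indeed one always has ``$\subseteq$'', and the reverse inclusion for the pair $(u,v) = (\mathbf{1}, \alpha)$ with $\alpha$ nontrivial says $0 = \mathrm{Mor}_{G_1}(\mathbf{1},\pi_1\alpha) \cap \mathrm{Mor}_{G_2}(\mathbf{1},\pi_2\alpha) = \mathrm{ran}(P_1)\cap\mathrm{ran}(P_2)$, and a Schur-orthogonality/tannakian argument shows this special case implies the general one. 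I expect the main obstacle to be precisely this last translation — verifying cleanly, in the quantum-group setting, that vanishing of joint fixed vectors for all irreducibles is equivalent to the full intertwiner-space identity (BCV's definition of ``generated by''), since it requires careful bookkeeping with conjugates, the Frobenius reciprocity isomorphism $\mathrm{Mor}_G(u,v) \cong \mathrm{Mor}_G(\mathbf{1}, v \otimes \bar u)$, and the fact that both sides of the claimed identity are determined by their values on trivial-versus-irreducible pairs. The spectral/alternating-projections part and the grading computation are routine; the conceptual content is packaging the BCV condition as ``$\mathrm{ran}(P_1)\cap\mathrm{ran}(P_2)=0$ for all nontrivial $\alpha$''.
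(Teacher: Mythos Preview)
The paper does not prove this proposition: it is stated in the Preliminaries section with a citation to \cite[Proposition 3.5]{BCV15} and no argument is given, so there is no ``paper's own proof'' to compare against. That said, your outline is correct and is essentially the argument found in the cited source: reduce to irreducible blocks via Peter--Weyl, identify $\widehat{\tau_i}(\alpha)$ with the orthogonal projection $P_i$ onto $G_i$-fixed vectors (self-adjointness using the Kac/trace hypothesis), invoke the von Neumann alternating-projections lemma to get $(P_1P_2)^k \to$ the projection onto $\mathrm{ran}(P_1)\cap\mathrm{ran}(P_2)$, and finally translate ``this intersection is zero for all nontrivial $\alpha$'' into the intertwiner identity defining $\langle G_1,G_2\rangle$ via Frobenius reciprocity $\mathrm{Mor}(u,v)\cong\mathrm{Mor}(\mathbf 1,v\otimes\bar u)$ and decomposition into irreducibles. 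Your assessment of where the work lies is accurate: the spectral and block-grading parts are routine, and the only place needing care is the last reduction, which goes through because restriction along $\pi_i$ is a tensor $*$-functor (so it commutes with conjugates and the Frobenius isomorphism), and because both sides of the intertwiner identity decompose compatibly under $H_v\cong\bigoplus_\alpha m_\alpha H_\alpha$.
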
 

The quantum group $O_n^{+}$ acts on ${\mathbb C}^n$ via 
$$\alpha(e_j) = \sum_{i=1}^n e_i \otimes u_{ij}, \; \; \; i, j = 1, ..., n$$ 
where $e_j$ ($j=1, ..., n$) is the standard basis for ${\mathbb C}^n$. 
Let $\xi \in {\mathbb C}^n$. Then the relation 
$\alpha (\xi) = \xi \otimes 1$ in terms of the basis $e_j$ 
defines a Woronowicz C$^*$-ideal (cf. \cite{simple}) $I_\xi$ of 
$C(O_n^{+})$.  
 Define $O_{n-1}^{+,\xi}$ to be the quantum subgroup of $O_n^{+}$ that fixes $\xi$, i.e., 
 $C(O_{n-1}^{+,\xi}) := C(O_n^{+})/I_\xi$. Note that $O_{n-1}^{+,\xi}$ is isomorphic to 
 $O_{n-1}^{+}$.  

\begin{theorem} \label{GeneratedO}
{\rm (\cite[Theorem 4.1]{BCV15})}
Let $n \geq 4$. The the following are true.

{\rm (1)} 
$O_n^+ =  \langle  O_n, O_{n-1}^{+, \; \xi} \rangle $ for each $\xi$ on the unit sphere $S^{n-1}$ in $\RR^n$, 
where 
$O_n$ is the classical orthogonal group. 

{\rm (2)} 
For any pair of linearly independent vectors  $\xi_1$,  $\xi_2$  on $S^{n-1}$, 
$O_n^+ =  \langle  O_{n-1}^{+, \; \xi_1}, O_{n-1}^{+, \; \xi_2} \rangle $.
\end{theorem}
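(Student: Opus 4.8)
The plan is to prove (1) by translating it, via the defining criterion for being generated by quantum subgroups (equivalently, via the criterion of \propref{GeneratedQG}), into a statement about intertwiner spaces, and then to deduce (2) from (1) by a soft argument. Write $u$ for the fundamental representation of $O_n^+$ on $\CC^n$; it is self-conjugate and every irreducible appears in some tensor power of $u$, so by semisimplicity (compressing by the relevant $O_n^+$-invariant projections, which are automatically invariant for the subgroups) the problem reduces to tensor powers of $u$. Since $O_n$ and $O_{n-1}^{+,\xi}$ are quantum subgroups of $O_n^+$, the inclusion $\mathrm{Mor}_{O_n^+}(u^{\otimes k},u^{\otimes l})\subseteq\mathrm{Mor}_{O_n}((\CC^n)^{\otimes k},(\CC^n)^{\otimes l})\cap\mathrm{Mor}_{O_{n-1}^{+,\xi}}(u^{\otimes k},u^{\otimes l})$ always holds, so it suffices to prove, for all $k,l\ge 0$, the reverse inclusion
\[
\mathrm{Mor}_{O_n}((\CC^n)^{\otimes k},(\CC^n)^{\otimes l})\ \cap\ \mathrm{Mor}_{O_{n-1}^{+,\xi}}(u^{\otimes k},u^{\otimes l})\ \subseteq\ \mathrm{Mor}_{O_n^+}(u^{\otimes k},u^{\otimes l}).
\]
Here I would invoke the standard combinatorial models of the three spaces: $\mathrm{Mor}_{O_n^+}(u^{\otimes k},u^{\otimes l})$ is spanned by the Temperley--Lieb maps $T_p$ indexed by \emph{non-crossing} pair partitions $p$ of $k+l$ points (Banica); $\mathrm{Mor}_{O_n}((\CC^n)^{\otimes k},(\CC^n)^{\otimes l})$ is spanned by the $T_p$ for \emph{all} pair partitions (the first fundamental theorem for $O_n$); and, since $u$ restricts on $\CC^n=\CC\xi\oplus\xi^{\perp}$ to $1\oplus v$ with $v$ the fundamental representation of $O_{n-1}^+\cong O_{n-1}^{+,\xi}$, the space $\mathrm{Mor}_{O_{n-1}^{+,\xi}}(u^{\otimes k},u^{\otimes l})$ is the direct sum over subsets $A\subseteq\{1,\dots,k\}$, $B\subseteq\{1,\dots,l\}$ of the Temperley--Lieb spaces $\mathrm{Mor}_{O_{n-1}^+}(v^{\otimes|A|},v^{\otimes|B|})$, sitting inside $\mathrm{Hom}((\CC^n)^{\otimes k},(\CC^n)^{\otimes l})$ by inserting the fixed vector $\xi$ in the complementary slots.

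The main device is compression along $\xi$. Normalize $\xi=e_n$, and for $A\subseteq\{1,\dots,k\}$, $B\subseteq\{1,\dots,l\}$ let $\Phi_{A,B}$ send $T\colon(\CC^n)^{\otimes k}\to(\CC^n)^{\otimes l}$ to the map $(\CC^{n-1})^{\otimes|A|}\to(\CC^{n-1})^{\otimes|B|}$ obtained by inserting $e_n$ in the slots outside $A$ and extracting the component of the output having $e_n$ in the slots outside $B$ and entries in $\xi^{\perp}\cong\CC^{n-1}$ in the slots of $B$. If $T$ is an $O_{n-1}^{+,\xi}$-intertwiner, then $\Phi_{A,B}(T)$ is an $O_{n-1}^+$-intertwiner, hence a non-crossing combination of pair-partition maps for $\CC^{n-1}$. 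On the other hand a short computation --- using that the cup vector $\sum_i e_i\otimes e_i$, contracted against $e_n$ on one leg and restricted to $\xi^{\perp}$ on the other, vanishes --- gives $\Phi_{A,B}(T_p)=0$ unless every block of $p$ lies entirely in $A\cup B$ or entirely in its complement, in which case $\Phi_{A,B}(T_p)=T_{p|_{A\cup B}}$, the complementary blocks contributing only a scalar. Hence, for a Brauer combination $T=\sum_p c_p T_p$ lying in the intersection above, $\sum_p c_p T_{p|_{A\cup B}}$ (sum over the $p$ with no block straddling the boundary) is a non-crossing combination, for every $A$ and $B$.

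The crux is to upgrade ``non-crossing after every compression'' to ``$T\in\mathrm{Mor}_{O_n^+}(u^{\otimes k},u^{\otimes l})$''. When $k+l$ is small relative to $n$ this is immediate: all the $T_p$ involved are then linearly independent, and one reads off that $c_p=0$ for crossing $p$. But $n$ is fixed while $k+l$ is arbitrary, so for large $k+l$ the $T_p$ satisfy relations and this direct argument collapses --- I expect this to be the main obstacle. The way around it is to argue one irreducible at a time: by \propref{GeneratedQG} the desired equality is equivalent to $\mathrm{Fix}_{O_n}(H_w)\cap\mathrm{Fix}_{O_{n-1}^{+,\xi}}(H_w)=\{0\}$ for every non-trivial irreducible $w=w_m$ of $O_n^+$, and on $H_{w_m}=f_m\,(\CC^n)^{\otimes m}$ --- $f_m$ the Jones--Wenzl projection --- these two fixed-point spaces have the rigid descriptions $f_m\cdot\mathrm{span}\{\xi_p:p\text{ crossing}\}$ and $f_m\cdot\mathrm{span}\{\eta_{S,q}\}$ (with $\eta_{S,q}$ built from $\xi$ in the slots outside $S$ and non-crossing $\CC^{n-1}$-cups inside $S$), and one forces the intersection to vanish by feeding these explicit vectors into the maps $\Phi_{A,B}$. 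The hypothesis $n\ge 4$, i.e.\ $n-1\ge 2$, enters precisely here, guaranteeing that $O_{n-1}^+$ is genuinely non-classical and that its intertwiner category is the full Temperley--Lieb category on a linearly independent diagram basis.

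For (2), no further representation theory is needed. The operation $(G_1,G_2)\mapsto\langle G_1,G_2\rangle$ is monotone: if $K_i$ is a quantum subgroup of $G_i$, then intersecting intertwiner categories only enlarges them, so $\langle K_1,K_2\rangle$ is a quantum subgroup of $\langle G_1,G_2\rangle$. Now fix linearly independent $\xi_1,\xi_2\in S^{n-1}$. The classical subgroups $O_{n-1}^{\xi_1},O_{n-1}^{\xi_2}$ of $O_n$ fixing $\xi_1$, resp.\ $\xi_2$, already generate $O_n$ (their Lie algebras span $\mathfrak{so}_n$), and $O_{n-1}^{\xi_i}$ is a quantum subgroup of $O_{n-1}^{+,\xi_i}$; so by monotonicity $O_n$ is a quantum subgroup of $\langle O_{n-1}^{+,\xi_1},O_{n-1}^{+,\xi_2}\rangle$. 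That quantum group thus contains both $O_n$ and $O_{n-1}^{+,\xi_1}$, hence contains $\langle O_n,O_{n-1}^{+,\xi_1}\rangle=O_n^+$ by (1); being a quantum subgroup of $O_n^+$, it equals $O_n^+$.
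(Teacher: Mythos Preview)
This theorem is not proved in the paper; it appears in the preliminaries as a quotation of \cite[Theorem~4.1]{BCV15}, with no accompanying argument. There is therefore no in-paper proof to compare your proposal against.

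On the proposal itself: your setup for (1) --- reducing to intertwiner spaces of tensor powers of $u$ and invoking the Temperley--Lieb description for $O_n^+$, the Brauer description for $O_n$, and the block decomposition of $u$ restricted to $O_{n-1}^{+,\xi}$ --- is the right framework and is indeed how \cite{BCV15} proceeds. But your argument stops exactly where you yourself label it ``the crux''. You correctly observe that the maps $T_p$ are not linearly independent once $k+l$ is large relative to $n$, propose to bypass this by working on the irreducibles $H_{w_m}$, and then assert that ``one forces the intersection to vanish by feeding these explicit vectors into the maps $\Phi_{A,B}$''. That last sentence is a hope, not an argument: nothing preceding it explains why the family of compressions $\Phi_{A,B}$ separates enough to force the intersection to zero, and the linear-dependence obstruction you raised has not actually been dealt with. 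Moreover, your stated reason for the hypothesis $n\ge 4$ --- that $n-1\ge 2$ makes $O_{n-1}^+$ non-classical with linearly independent Temperley--Lieb diagrams --- already holds at $n=3$ (indeed $O_2^+$ is non-classical and the non-crossing diagram maps on $\CC^2$ are linearly independent), so this cannot be where the hypothesis genuinely enters.

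Your deduction of (2) from (1) is, by contrast, complete and correct: monotonicity of $\langle\,\cdot\,,\,\cdot\,\rangle$ together with the classical fact that two distinct stabilisers $O_{n-1}^{\xi_i}$ generate $O_n$ yields $O_n\subset\langle O_{n-1}^{+,\xi_1},O_{n-1}^{+,\xi_2}\rangle$, and then (1) finishes. This is essentially the argument given in \cite{BCV15}.
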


Let $G_1$ and $G_2$ be CQGs. Then there is a CQG, denoted by $G_1 \hat{*} G_2$,  such 
that the free product $C(G_1) * C(G_2) = C(G_1 \hat{*} G_2)$ (cf.  \cite{free}).  
The notation $G_1 \hat{*} G_2$ is introduced in 2.1 of \cite{BCV15} referring to the reduced 
algebra $C_r (G_1) * C_r (G_2)$, but we prefer to use this to refer to the full 
$C^*$-algebra  $C(G_1) * C(G_2)$ under the universal norm.  
It makes no difference since the reduced and the full algebras 
describe the same quantum group (cf. III.7(2) in \cite{krein}).

\begin{theorem} \label{GeneratedevenO}
{\rm (\cite[Theorem 4.2]{BCV15})}
Let $S_{2n}$ be the permutation group on $2n$ symbols with $n \geq 2$. Then
$O_{2n}^+ =  \langle  S_{2n}, O_n^+ \hat{*} O_n^+ \rangle $.
\end{theorem}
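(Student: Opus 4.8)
\medskip

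\noindent\textbf{Proof proposal.} The plan is to verify the intertwiner condition of \cite{BCV15} defining generation directly, reducing by Woronowicz's Tannaka--Krein duality to tensor powers of the fundamental corepresentation and then computing with Temperley--Lieb and partition operators. Let $U=(u_{ij})$ be the fundamental $2n$-dimensional corepresentation of $O_{2n}^{+}$, acting on $\CC^{2n}=\CC^{n}_{a}\oplus\CC^{n}_{b}$. Realize $S_{2n}\subseteq O_{2n}^{+}$ by permutation matrices, so that $\pi_{1}(U)$ is the permutation representation of $S_{2n}$ on $\CC^{2n}$; realize $O_n^{+}\,\hat{*}\,O_n^{+}\subseteq O_{2n}^{+}$ by the block-diagonal surjection $C(O_{2n}^{+})\to C(O_n^{+})*C(O_n^{+})$ sending $u_{ij}$ to the generators of the first (resp.\ second) copy of $C(O_n^{+})$ on the $a$-block (resp.\ $b$-block) and to $0$ off the blocks; one checks this respects coproducts and orthogonality relations, and $\pi_{2}(U)=u\oplus v$, the direct sum of the fundamental corepresentations $u,v$ of the two free factors. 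Since $U$ is self-conjugate and every finite-dimensional corepresentation of $O_{2n}^{+}$ is contained in some $U^{\otimes m}$, and since restriction of intertwiners along a quantum subgroup inclusion is automatic, proving $O_{2n}^{+}=\langle S_{2n},O_n^{+}\hat{*}O_n^{+}\rangle$ amounts to proving
\[
\mathrm{Mor}_{S_{2n}}\bigl(\pi_{1}(U^{\otimes k}),\pi_{1}(U^{\otimes l})\bigr)\cap\mathrm{Mor}_{O_n^{+}\hat{*}O_n^{+}}\bigl(\pi_{2}(U^{\otimes k}),\pi_{2}(U^{\otimes l})\bigr)\subseteq\mathrm{Mor}_{O_{2n}^{+}}(U^{\otimes k},U^{\otimes l}),\quad k,l\ge 0,
\]
the reverse inclusion being automatic for the same reason.

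Next I would model the three intertwiner spaces diagrammatically. It is classical that $\mathrm{Mor}_{O_{2n}^{+}}(U^{\otimes k},U^{\otimes l})$ is spanned by the Temperley--Lieb operators $T_{p}$ over non-crossing pair partitions $p$ of the $k+l$ legs, and that for $n\ge 2$ the loop parameter $2n\ge 4$ makes these $T_{p}$ linearly independent; that $\mathrm{Mor}_{S_{2n}}(U^{\otimes k},U^{\otimes l})$ is spanned by $T_{p}$ over \emph{all} set partitions $p$; and that, using the free-product structure \cite{free} and the attendant fusion rules, $\mathrm{Mor}_{O_n^{+}\hat{*}O_n^{+}}(\pi_{2}(U^{\otimes k}),\pi_{2}(U^{\otimes l}))$ is spanned by \emph{two-colored} operators $T_{q,c}$, where $q$ is a non-crossing pair partition, $c$ colors each block of $q$ by $a$ or $b$, and $T_{q,c}$ additionally forces the indices of an $a$-block into $\CC^{n}_{a}$ and those of a $b$-block into $\CC^{n}_{b}$. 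Relative to the $\ZZ_2$-graded decomposition $\mathrm{Hom}\bigl((\CC^{2n})^{\otimes k},(\CC^{2n})^{\otimes l}\bigr)=\bigoplus_{s,t}\mathrm{Hom}(V_{s},V_{t})$, each $T_{q,c}$ lives in a single summand, where it is a member of the (again, for $n\ge 2$, linearly independent) Temperley--Lieb basis of $\mathrm{Mor}_{O_n^{+}}$; so the family $\{T_{q,c}\}$ is linearly independent.

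Given these models, the remaining inclusion would be established as follows. Take $X$ in the intersection and write $X=\sum_{q,c}\beta_{q,c}T_{q,c}$, the coefficients now being unique. Projecting onto each $\ZZ_2$-graded summand and comparing with any expansion of $X$ by partition operators supplied by $X\in\mathrm{Mor}_{S_{2n}}$, one shows that the only colored diagrams that can survive in $X$ are those produced by \emph{uncolored} non-crossing pair partitions, i.e.\ $X\in\mathrm{span}\{T_{p}:p\text{ a non-crossing pair partition}\}=\mathrm{Mor}_{O_{2n}^{+}}(U^{\otimes k},U^{\otimes l})$. Three mechanisms drive this: a block of size $\ge 3$ cannot occur, because $O_n^{+}$ has only pair-partition intertwiners; a monochromatic crossing cannot occur, because $O_n^{+}$ is itself non-crossing; and a ``bichromatic'' crossing cannot occur, because the two families of generators of $C(O_n^{+})*C(O_n^{+})$ do not commute, so the relations $uu^{t}=u^{t}u=I$ and $vv^{t}=v^{t}v=I$ cannot be applied across such a configuration.

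I expect the main obstacle to be exactly this last step: turning the slogan ``intersection of the two spanning sets $=\mathrm{Mor}_{O_{2n}^{+}}$'' into a clean proof, in the face of the fact that for a fixed dimension $2n$ the operators $T_{p}$ on tensor powers of $\CC^{2n}$ do become linearly dependent once $k+l$ is large. The remedy is to work summand-by-summand in the $\ZZ_2$-grading, to use the faithfulness of the Temperley--Lieb calculus at loop parameter $n\ge 2$ (this is exactly where the hypothesis $n\ge 2$ is consumed), and to track carefully how $S_{2n}$-conjugation acts on the color structure. A diagram-free alternative is to apply Proposition~\ref{GeneratedQG}: with $\tau_{1}=h_{S_{2n}}\circ\pi_{1}$, $\tau_{2}=h_{O_n^{+}\hat{*}O_n^{+}}\circ\pi_{2}$, and using that the Haar state of $O_n^{+}\hat{*}O_n^{+}$ is the free product of the two Haar states, one would instead show by a Weingarten-type moment computation that $(\tau_{1}\star\tau_{2})^{\star k}\to h_{O_{2n}^{+}}$ pointwise; this merely relocates the same combinatorial core into a free-probability estimate.
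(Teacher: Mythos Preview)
The paper does not contain a proof of this theorem: it is stated in Section~\ref{prelim} as a preliminary result quoted verbatim from \cite[Theorem~4.2]{BCV15}, and the present paper only \emph{uses} it (in the proof of Theorem~\ref{factpO}) rather than establishing it. So there is no ``paper's own proof'' to compare your attempt against.

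That said, your outline is broadly the right shape for the argument in \cite{BCV15}: reduce via Tannaka--Krein to intertwiner spaces between tensor powers of the fundamental, describe those spaces diagrammatically (Temperley--Lieb for $O_{2n}^{+}$, all set partitions for $S_{2n}$, two-colored non-crossing pairings for $O_n^{+}\hat{*}O_n^{+}$), and then argue that the intersection forces non-crossing pair partitions. Two caveats on your sketch. First, your linear-independence claims for the $T_{p}$ are not quite right as stated: Temperley--Lieb operators on $(\CC^{2n})^{\otimes k}$ are linearly independent for all $k$ when $2n\ge 2$ (the loop parameter is $2n$, not $n$), but the full set-partition operators for $S_{2n}$ are \emph{not} linearly independent once $k+l>2n$, so the ``projecting and comparing coefficients'' step cannot be done na\"{\i}vely on the $S_{2n}$ side; one has to work either with the category abstractly or pass through a stabilization/M\"obius-inversion argument. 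Second, the ``three mechanisms'' paragraph is where the real work lies, and as written it is a heuristic rather than a proof: in particular, ruling out that a general linear combination in the intersection could involve non-pair or crossing partitions requires more than observing that individual such diagrams are absent from one side. The actual proof in \cite{BCV15} handles these issues carefully; if you intend to reproduce it, that is where the effort should go.
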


Next we recall the notion of amenable traces on $C^*$-algebras and the 
Kirchberg's factorization property. 
The main theorem on this is due to Kirchberg (cf. \cite{Kirch94}), which is presented in more digestable 
form in \cite[Theorem 6.1]{Oza04},  and \cite[Theorem 6.2.7]{BO}. 
To state it, we fix some notation first. 

Let $A$ be a unital $C^*$-algebra and $\tau$ a trace on $A$ 
with the GNS-triple $(\pi_\tau, H_\tau, \Lambda_\tau(1))$, 
where 
$\Lambda_\tau(x)$ is the canonical image of $x \in A$ in the GNS Hilbert space 
$H_\tau$.
Denote by $\pi_\tau^{\rm op}$ the representation 
of the opposite $C^*$-algebra $A^{\rm op}$ of $A$ on $H_\tau$
defined by 
$$\pi_\tau^{\rm op} (b) \Lambda_\tau(x)= \Lambda_\tau(xb), \; \; \;  b, x\in A. $$ 

This gives rise to a representation $(\pi_\tau \cdot \pi_\tau^{\rm op})_{\rm alg}$
of $A\otimes_{\rm alg} A^{\rm op}$ on $H_\tau$ by 
$$
(\pi_\tau \cdot \pi_\tau^{\rm op})_{\rm alg} \colon A \otimes_{\rm alg} A^{\rm op}
 \ni\sum_k a_k \otimes {b_k}
 \longmapsto \sum_k\pi_\tau(a_k) \pi_\tau^{\rm op} (b_k)\in B(H_\tau)$$ 
which is continuous with respect to the maximal tensor norm.

The normalized trace on the $k \times k$  matrix algebra $M_k$ is denoted by $\tr_k$.

\begin{theorem}  \label{amentrace}
{\rm (\cite[Proposition 3.2]{Kirch94}, \cite[Theorem 6.1]{Oza04}, \cite[Theorem 6.2.7]{BO})} 

For a trace $\tau$ on a $C^*$-algebra $A$ in $B(H)$, 
the following are equivalent. 

{\rm (1)} 
The trace $\tau$ extends to an $A$-central state $\psi$ on $B(H)$,
i.e., the trace $\tau$ extends to a state $\psi$ on $B(H)$, such that 
$\psi(uxu^{*})=\psi(x)$ for every $x$ in $B(H)$ and  every unitary $u$ in $A$.

{\rm (2)}  \label{factnet}
There is a net of unital and completely positive (abbreviated UCP) maps $\varphi_n :  A \to M_{k_n}$  
such that $\displaystyle \tau(a) = \lim_n {\rm tr}_{k_n} (\varphi_n (a))$ 
and $\displaystyle \lim_n {\rm tr}_{k_n} (\varphi_n (b^*a) - \varphi_n (b^*) \varphi_n (a))=0$ 
for all $a,b$ in $A$. 

{\rm (3)} 
The representation $(\pi_\tau \cdot \pi_\tau^{\rm op})_{\rm alg}$ is continuous 
with respect to the minimal tensor norm on $A \otimes_{\rm alg} {A}^{\rm op}$. 
\end{theorem}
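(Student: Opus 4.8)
The plan is to prove the cycle $(2)\Rightarrow(3)\Rightarrow(1)\Rightarrow(2)$, the last arrow being the substantial one. Four tools recur. First, a \emph{recognition principle}: any state $\omega$ on $A\otimes_{\rm min}A^{\rm op}$ with $\omega(a\otimes b^{\rm op})=\tau(ab)$ has GNS representation unitarily equivalent to $(\pi_\tau\cdot\pi_\tau^{\rm op})$, because $\Lambda_\tau(1)$ is cyclic for $(\pi_\tau\cdot\pi_\tau^{\rm op})(A\otimes_{\rm alg}A^{\rm op})$ (as $(\pi_\tau\cdot\pi_\tau^{\rm op})(a\otimes1)\Lambda_\tau(1)=\Lambda_\tau(a)$) and traciality of $\tau$ makes the two families of inner products coincide on simple tensors; hence to get (3) it suffices to produce a net of states on $A\otimes_{\rm min}A^{\rm op}$ converging pointwise on $A\otimes_{\rm alg}A^{\rm op}$ to $a\otimes b^{\rm op}\mapsto\tau(ab)$. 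Second, the left--right multiplication representation $(L\cdot R)(a\otimes b^{\rm op})\eta=a\eta b$ of $A\otimes_{\rm alg}A^{\rm op}$ on the Hilbert--Schmidt operators $S_2(H)$, which is automatically $\otimes_{\rm min}$-continuous since it is the restriction of the faithful regular representation of $B(H)\otimes_{\rm min}B(H)^{\rm op}$ on $H\otimes\bar H\cong S_2(H)$. Third, Arveson's extension theorem together with the Stinespring estimate bounding the defect $\Phi(xa)-\Phi(x)\Phi(a)$ of a UCP map $\Phi$ in terms of $\Phi(a^*a)-\Phi(a^*)\Phi(a)$. Fourth, Day's convexity trick in the predual $B(H)_*$ and the Powers--St{\o}rmer inequality $\|S-T\|_2^2\le\|S^2-T^2\|_1$.

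For $(2)\Rightarrow(3)$: tensor $\varphi_n$ with its transpose $\varphi_n^{\rm op}\colon A^{\rm op}\to M_{k_n}^{\rm op}$ (still UCP) to get a UCP map $A\otimes_{\rm min}A^{\rm op}\to M_{k_n}\otimes_{\rm min}M_{k_n}^{\rm op}$, compose with the left--right multiplication $*$-representation of $M_{k_n}\otimes M_{k_n}^{\rm op}$ on $L^2(M_{k_n},\tr_{k_n})$, and take the vector state at the identity matrix; the resulting states on $A\otimes_{\rm min}A^{\rm op}$ send $a\otimes b^{\rm op}$ to $\tr_{k_n}(\varphi_n(a)\varphi_n(b))$, which by hypothesis (2) tends to $\tau(ab)$, so the recognition principle gives (3). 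For $(3)\Rightarrow(1)$: under (3) the functional $\phi(y):=\langle(\pi_\tau\cdot\pi_\tau^{\rm op})(y)\Lambda_\tau(1),\Lambda_\tau(1)\rangle$ is a state on $A\otimes_{\rm min}A^{\rm op}$ with $\phi(a\otimes b^{\rm op})=\tau(ab)$; extend it along $A\otimes_{\rm min}A^{\rm op}\subseteq B(H)\otimes_{\rm min}A^{\rm op}$ to a state $\widetilde\phi$ and set $\psi(x)=\widetilde\phi(x\otimes1)$. A direct computation using traciality of $\tau$ gives $\phi(d^*d)=\phi(dd^*)=0$ for $d=a\otimes1-1\otimes a^{\rm op}$, so Cauchy--Schwarz for $\widetilde\phi$ yields $\widetilde\phi((x\otimes1)d)=\widetilde\phi(d(x\otimes1))=0$, i.e. $\psi(xa)=\widetilde\phi(x\otimes a^{\rm op})=\psi(ax)$, while $\psi|_A=\tau$; thus $\psi$ is the required $A$-central extension. (Two cross-checks: $(1)\Rightarrow(3)$ also follows directly, since Day's trick turns an $A$-central extension $\psi$ of $\tau$ into density matrices $T_n$ with $\Tr(T_na)\to\tau(a)$ and $\|u^*T_nu-T_n\|_1\to0$ for unitaries $u\in A$, Powers--St{\o}rmer gives $\|a\xi_n-\xi_n a\|_2\to0$ for $\xi_n=T_n^{1/2}$, and the vector states at $\xi_n$ of $(L\cdot R)$ on $S_2(H)$ send $a\otimes b^{\rm op}$ to $\Tr(a\xi_nb\xi_n)\to\Tr(abT_n)\to\tau(ab)$; and $(2)\Rightarrow(1)$ follows from Arveson extensions $\widetilde\varphi_n\colon B(H)\to M_{k_n}$, a weak-$*$ cluster point of $\tr_{k_n}\circ\widetilde\varphi_n$, the Stinespring estimate, and traciality of $\tr_{k_n}$.)

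The main obstacle is $(1)\Rightarrow(2)$, which is the genuine content of Kirchberg's theorem. From the nearly-$A$-central density matrices $T_n$ supplied by Day's trick (with $\Tr(T_na)\to\tau(a)$) one must fabricate UCP maps $\varphi_n\colon A\to M_{k_n}$ into \emph{honest full matrix algebras} whose normalized traces recover $\tau$ and which are asymptotically multiplicative for $\tr_{k_n}$. Compressing left multiplication to an approximately translation-invariant finite-dimensional (F{\o}lner-type) subspace of $S_2(H)$ delivers the asymptotic multiplicativity, but the normalized trace of such a compression generally fails to equal $\tau$, because $T_n$ is not a uniform density matrix; reconciling this requires a careful joint choice of the subspace together with an averaging/amplification step that evens out the eigenvalue weights without destroying the near-invariance. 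I expect this balancing to absorb essentially all of the work, and I would carry it out along the lines of \cite{Kirch94} and the expositions in \cite{Oza04, BO}.
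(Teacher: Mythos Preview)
The paper does not prove this theorem at all: Theorem~\ref{amentrace} is stated in the Preliminaries section purely as a quotation from the literature, with references to \cite[Proposition 3.2]{Kirch94}, \cite[Theorem 6.1]{Oza04}, and \cite[Theorem 6.2.7]{BO}, and no proof or sketch is given. So there is nothing in the paper to compare your proposal against.

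That said, your sketch is essentially the standard argument from those references. The implications $(2)\Rightarrow(3)$ and $(3)\Rightarrow(1)$ are handled correctly, and you rightly isolate $(1)\Rightarrow(2)$ as the substantial step. Your description of the obstacle there---that compressing to an approximately invariant finite-dimensional subspace of $S_2(H)$ gives asymptotic multiplicativity but does not by itself recover $\tau$ under the \emph{normalized} trace, so that an additional averaging/amplification is needed---is accurate, and deferring to \cite{Kirch94,Oza04,BO} for that construction is exactly what the paper itself does (implicitly, by citation). If you were writing this up in full you would need to fill in that step; for the purposes of this paper it is simply assumed.
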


\begin{definition}
A trace $\tau$ is called {\bf amenable} if it satisfies the equivalent conditions in 
\thmref{amentrace}. The net $\varphi_n$ in 
(2) above is called a {\bf factorization net} for $\tau$, 
and the second condition in  
(2) is called the 
{\bf approximate multiplicativity condition}. 
\end{definition}

\begin{definition}
Let $G$ be a CQG of Kac type with Haar trace $h$. 
Its discrete quantum group dual $\hat{G}$ is said to have {\bf Kirchberg's factorization property} 
(or simply {\bf the factorization property}) if $\tau$ is amenable. 
\end{definition}

Denote $\pi_h$ by $\lambda$ and define representation $\rho$ of $C(G)$ 
by 
$$\rho(a)\Lambda_h(b) = \Lambda_h( b \kappa(a)), \; \; \; a, b \in C(G).$$ 
Then we can  replace the opposite $C^*$-algebra $C(G)^{\rm op}$ with the 
$C^*$-algebra $C(G)$  in the above formulation, and 
$\pi_h \otimes_{\rm alg} \pi_h^{\rm op}$ is replaced with 
$\lambda \otimes_{\rm alg} \rho$,  as stated in the introduction of this paper. 

\begin{remark} 
Note that if $\hat{G}$ has {\bf Kirchberg's factorization property}, then 
its Haar trace is hyperlinear, cf. \cite[Proposition 3.2]{Kirch94} and \cite[Definition 2]{BCV15}.
\end{remark}

\begin{proposition}  \label{amentrc}
{\rm (\cite[Proposition 6.3.7]{BO})}
The set of amenable traces on $A$ is a weak$^*$-closed convex face 
in the space of tracial states on $A$. 
\end{proposition}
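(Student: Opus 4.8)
The plan is to derive all three assertions from the matricial characterization of amenability, condition~(2) of \thmref{amentrace}, used in its equivalent \emph{local} form: a tracial state $\tau$ on $A$ is amenable iff for every finite $F\subseteq A$ and every $\varepsilon>0$ there are $k\in\NN$ and a UCP map $\varphi\colon A\to M_k$ with $|\tau(a)-\tr_k(\varphi(a))|<\varepsilon$ and $|\tr_k(\varphi(b^*a)-\varphi(b^*)\varphi(a))|<\varepsilon$ for all $a,b\in F$ (indexing the $\varphi$'s by the pairs $(F,\varepsilon)$ turns such a family into a factorization net, and a factorization net plainly provides such $\varphi$'s). Weak$^*$-closedness is then immediate: if a tracial state $\tau$ lies in the weak$^*$-closure of the amenable traces, then given $(F,\varepsilon)$ one picks an amenable $\tau'$ with $|\tau(a)-\tau'(a)|<\varepsilon/2$ on $F$ and applies the local condition to $\tau'$ with data $(F,\varepsilon/2)$; the resulting $\varphi$ also works for $\tau$ and $(F,\varepsilon)$.

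For convexity let $\tau_1,\tau_2$ be amenable, $t\in(0,1)$, and $\tau=t\tau_1+(1-t)\tau_2$. Given $(F,\varepsilon)$, fix a small $\delta>0$ and choose UCP maps $\varphi_i\colon A\to M_{k_i}$ witnessing the local condition for $\tau_i$ with data $(F,\delta)$. Since $\{\,pk_1/(pk_1+qk_2):p,q\in\NN\,\}$ is dense in $(0,1)$, pick $p,q$ with $|\,pk_1/(pk_1+qk_2)-t\,|<\delta$ and set $\varphi=\varphi_1^{\oplus p}\oplus\varphi_2^{\oplus q}\colon A\to M_{pk_1+qk_2}$. Then $\varphi$ is UCP, its normalized trace is the convex combination $\tfrac{pk_1}{pk_1+qk_2}\,\tr_{k_1}\!\circ\varphi_1+\tfrac{qk_2}{pk_1+qk_2}\,\tr_{k_2}\!\circ\varphi_2$, which differs from $\tau$ on $F$ by at most a fixed multiple of $\delta$, and the approximate-multiplicativity defect of $\varphi$ is the same convex combination of those of $\varphi_1,\varphi_2$; for $\delta$ small enough this verifies the local condition for $\tau$, so $\tau$ is amenable.

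The face property is the heart of the matter. Let $\tau=t\tau_1+(1-t)\tau_2$ be amenable with $\tau_1,\tau_2$ tracial states and $t\in(0,1)$; by symmetry it suffices to show $\tau_1$ is amenable. From $\tau\ge t\tau_1$ on $A_+$, the state $\tau_1$ extends to a normal tracial functional on $M:=\pi_\tau(A)''$ dominated by $t^{-1}\hat\tau$, where $\hat\tau$ is the canonical faithful normal extension of $\tau$; the Radon--Nikodym theorem for $\hat\tau$ gives a unique $h_1\in M_+$, $0\le h_1\le t^{-1}$, with this extension equal to $\hat\tau(h_1\,\cdot\,)$, and the trace property forces $h_1\in Z(M)$ (from $\hat\tau(h_1 xy)=\hat\tau(h_1 yx)$ for all $x,y$ one gets $\|h_1 y-y h_1\|_{2,\hat\tau}=0$). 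Identifying $Z(M)$ with $L^\infty$ of a probability space for $\hat\tau|_{Z(M)}$ and approximating $h_1\ge 0$ in $L^1$-norm by nonnegative simple functions, one sees that $\tau_1$ is a weak$^*$-limit of convex combinations of the traces $\tau_z:=\hat\tau(z\,\cdot\,)/\hat\tau(z)$ with $z$ a nonzero projection in $Z(M)$. By the convexity and weak$^*$-closedness already proved, it is therefore enough to show each such $\tau_z$ is amenable.

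Fix such a central projection $z$ and a factorization net $\varphi_n\colon A\to M_{k_n}$ for $\tau$. Choose $a_n\in A$, $a_n=a_n^*$, $0\le a_n\le 1$, converging $\ast$-strongly to $z$ in $M$, slowly enough --- a reindexing against the data of $(\varphi_n)$ and a countable $\ast$-strongly dense subset of the unit ball of $A$ --- that, writing $\|\cdot\|_{2,n}$ for the $\tr_{k_n}$-Hilbert--Schmidt norm, one has $\|\varphi_n(a_n)^2-\varphi_n(a_n)\|_{2,n}\to 0$, $\tr_{k_n}(\varphi_n(a_n b))\to\hat\tau(z\,\pi_\tau(b))$, and $\tr_{k_n}(\varphi_n(a_n)\varphi_n(b))-\tr_{k_n}(\varphi_n(a_n b))\to 0$ for every $b$ in the relevant finite sets (here the standard estimates relating the approximate-multiplicativity defect of a UCP map to $2$-norm defects are used). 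Let $q_n:=\chi_{[1/2,\infty)}(\varphi_n(a_n))$, a genuine projection in $M_{k_n}$; then $\|q_n-\varphi_n(a_n)\|_{2,n}\to 0$, so the rank $m_n$ of $q_n$ satisfies $m_n/k_n=\tr_{k_n}(q_n)\to\hat\tau(z)>0$. Define $\psi_n\colon A\to q_n M_{k_n}q_n\cong M_{m_n}$ by $\psi_n(a):=q_n\varphi_n(a)q_n$, a UCP map with $\psi_n(1)=q_n$. A direct computation with the three limits above (replace $q_n$ by $\varphi_n(a_n)$, then use approximate multiplicativity, then the convergence $\tr_{k_n}(\varphi_n(\,\cdot\,))\to\hat\tau$) yields $\tr_{m_n}(\psi_n(a))\to\hat\tau(z\,\pi_\tau(a))/\hat\tau(z)=\tau_z(a)$ and $\tr_{m_n}(\psi_n(b^*a)-\psi_n(b^*)\psi_n(a))\to 0$ for all $a,b\in A$; that is, $(\psi_n)$ is a factorization net for $\tau_z$, so $\tau_z$ is amenable and the proof is complete. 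I expect the bookkeeping in this last paragraph to be the only real obstacle: arranging the $n$-dependent approximants $a_n\to z$ so that the pointwise limits defining a factorization net survive the passage to varying elements, manufacturing honest projections $q_n$ from the almost-projections $\varphi_n(a_n)$, and checking that the normalization $m_n/k_n\to\hat\tau(z)$ is exactly what makes the compressed traces converge to $\tau_z$ rather than to a scalar multiple of it.
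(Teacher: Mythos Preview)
The paper does not supply its own proof of this proposition: it is stated with a bare citation to \cite[Proposition~6.3.7]{BO} and is used only as a black box in the proof of \thmref{amenGeneratedQG}. There is therefore nothing in the paper to compare your argument against.

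Your proposal is a self-contained reconstruction along standard lines, and the overall architecture is sound: the local reformulation of condition~(2) gives weak$^*$-closedness immediately, the block-diagonal direct-sum trick gives convexity, and the face property is handled by Radon--Nikodym inside $\pi_\tau(A)''$ together with a compression of the factorization net by approximate central projections. One point worth flagging: in the compression step you need $\|\varphi_n(a_n)^2-\varphi_n(a_n)\|_{2,n}\to 0$, i.e.\ a $2$-norm asymptotic-multiplicativity statement, whereas the version of \thmref{amentrace}(2) recorded in this paper only asserts the weaker trace defect $\tr_{k_n}(\varphi_n(b^*a)-\varphi_n(b^*)\varphi_n(a))\to 0$. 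The $2$-norm upgrade is part of the full statement of \cite[Theorem~6.2.7]{BO} (and follows, e.g., from Stinespring dilation combined with the trace condition), so the gap is purely one of citation rather than substance; just be explicit that you are invoking the stronger equivalent form when you pass from almost-projections $\varphi_n(a_n)$ to genuine projections $q_n$.
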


\begin{proposition} \label{fpamentr}
{\rm (\cite[Proposition 7.3]{Oza04}, \cite[5.5]{BD04})}
If $\tau_i$ are amenable traces on $A_i$ ($i=1,2$), then the free product 
$\tau_1 \ast \tau_2$ is an amenable trace on $A_1 * A_2$. 
\end{proposition}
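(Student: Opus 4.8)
The plan is to use the ultrapower reformulation of amenability of a trace: for a separable unital $C^*$-algebra $A$, a tracial state $\tau$ on $A$ is amenable if and only if there is a trace-preserving unital $*$-homomorphism $A \to R^\omega$, where $R$ is the hyperfinite $\mathrm{II}_1$ factor and $R^\omega$ may be realized as a matrix ultraproduct $\prod_\omega M_{k_n}$ with $k_n \to \infty$. One implication follows by assembling a factorization net as in \thmref{amentrace}(2) into a single $*$-homomorphism into $\prod_\omega M_{k_n}$: the canonical trace is faithful on the ultraproduct, so approximate multiplicativity becomes exact multiplicativity in the limit, while the approximation error for $\tau$ is absorbed by $\lim_\omega$. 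The converse follows by lifting such a $*$-homomorphism to a sequence of u.c.p. maps $A \to M_{k_n}$. The non-separable case, which is not needed for the algebras of this paper, reduces to the separable one together with \propref{amentrc}. So it is enough to manufacture, out of trace-preserving $*$-homomorphisms $\theta_i : A_i \to R^\omega$ implementing $\tau_i$ ($i=1,2$), a trace-preserving $*$-homomorphism $A_1 * A_2 \to R^\omega$ implementing $\tau_1 * \tau_2$.

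Because the full free product $A_1 * A_2$ is the coproduct in the category of unital $C^*$-algebras, any pair of unital $*$-homomorphisms into $R^\omega$ induces one out of $A_1 * A_2$; in particular, for a unitary $V \in R^\omega$ we obtain a unital $*$-homomorphism $\Theta := \theta_1 * \bigl(V\theta_2(\cdot)V^{*}\bigr) : A_1 * A_2 \to R^\omega$. Its pullback $\tau_{R^\omega} \circ \Theta$ is a tracial state restricting to $\tau_i$ on each $A_i$. Since $\tau_1 * \tau_2$ is, by definition, the unique state on $A_1 * A_2$ with these restrictions for which $A_1$ and $A_2$ are freely independent, we will have $\tau_{R^\omega} \circ \Theta = \tau_1 * \tau_2$, and hence amenability of $\tau_1 * \tau_2$, as soon as $V$ is chosen so that $\theta_1(A_1)$ and $V\theta_2(A_2)V^{*}$ are freely independent in $(R^\omega, \tau_{R^\omega})$.

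The existence of such a $V$ is the crux, and this is exactly where Voiculescu's asymptotic freeness enters. Represent the $\theta_i$ by u.c.p. maps $\theta_{i,n} : A_i \to M_{k_n}$ that are $\omega$-asymptotically multiplicative and $\omega$-asymptotically trace-preserving; then, along $\omega$, the $*$-moments of $\theta_{i,n}(A_i) \subset (M_{k_n}, \operatorname{tr}_{k_n})$ converge to those of $(A_i, \tau_i)$. Taking $U_n$ Haar-distributed in the unitary group of $M_{k_n}$, Voiculescu's theorem on the asymptotic freeness of a family of deterministic matrices with convergent $*$-distribution and an independent Haar unitary shows that, for large $n$ with overwhelming probability, $\theta_{1,n}(A_1)$ and $U_n\theta_{2,n}(A_2)U_n^{*}$ are arbitrarily close in joint $*$-moments to the free product $(A_1,\tau_1) * (A_2,\tau_2)$; selecting such $U_n$ along a suitable exhaustion of the relevant words and tolerances and setting $V = (U_n)_n$ yields the required freeness in $R^\omega$. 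This step — equivalently, the assertion that the reduced free product of two $R^\omega$-embeddable tracial $C^*$-algebras is again $R^\omega$-embeddable with the free product trace — is the main obstacle; in the references it is carried out within the u.c.p.-map picture of \thmref{amentrace}(2), using Boca's theorem on free products of completely positive maps together with Voiculescu's asymptotic freeness, and this is the content of \cite{Oza04,BD04} that \propref{fpamentr} records.
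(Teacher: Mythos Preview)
The paper does not supply its own proof of \propref{fpamentr}; it is quoted from \cite{Oza04,BD04} and used as a black box. Your proposal is therefore not being compared against anything in the paper, but rather against the cited literature, and on that score your sketch is sound: the equivalence between amenability of $\tau$ and the existence of a trace-preserving $*$-homomorphism into $R^\omega$ (for separable $A$) is standard, and reducing the problem to finding a unitary $V\in R^\omega$ making $\theta_1(A_1)$ and $V\theta_2(A_2)V^*$ free is exactly the right reformulation. The appeal to Voiculescu's asymptotic freeness with Haar unitaries, followed by a diagonal selection of deterministic $U_n$, is the known mechanism for producing such a $V$; this is essentially the argument in \cite{BD04}.

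One remark on packaging: in \cite{Oza04} the proof is run directly in the u.c.p.\ picture of \thmref{amentrace}(2), using Boca's theorem to form free products of u.c.p.\ maps into matrices and then invoking asymptotic freeness to verify the approximate multiplicativity and trace conditions for $\tau_1*\tau_2$. Your $R^\omega$ formulation is logically equivalent but slightly cleaner conceptually, since exact multiplicativity in the ultraproduct replaces bookkeeping of error terms. Either route is acceptable; you correctly identify the one nontrivial input (asymptotic freeness) and are honest that this is where the work lies.
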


\section{Amenable Trace on Generated Quantum Subgroup}

In this section, we prove the general result that if $G_1$ and $G_2$ are 
CQGs with amenable Haar traces, then the quantum $ \langle  G_1, G_2 \rangle $ generated 
by them is a CQG with amenable Haar trace. This will be used later to prove the factorization 
property of discrete quantum groups $\widehat{U_n^+}$ and $\widehat{O_n^+}$.  

\begin{lemma} \label{amenquotient}
Let $(G_1, \pi_1)$ be a quantum subgroup of a CQG $G$ with surjection 
$\pi_1: C(G) \rightarrow C(G_1)$, and amenable Haar trace $ h_{G_1}$. 
Let $\tau_1 = h_{G_1} \circ  \pi_1$.
Then $\tau_1$ is an amenable trace on $C(G)$. 
\end{lemma}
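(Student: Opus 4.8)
The statement is essentially a pullback/pushforward stability result for amenability of traces along a surjective $*$-homomorphism, so the plan is to verify condition (2) of Theorem~\ref{amentrace} for $\tau_1 = h_{G_1} \circ \pi_1$ by pulling back a factorization net for the amenable trace $h_{G_1}$ on $C(G_1)$. First I would invoke the hypothesis: since $h_{G_1}$ is amenable on $C(G_1)$, there is a net of UCP maps $\psi_\alpha \colon C(G_1) \to M_{k_\alpha}$ with $h_{G_1}(x) = \lim_\alpha \tr_{k_\alpha}(\psi_\alpha(x))$ and $\lim_\alpha \tr_{k_\alpha}\bigl(\psi_\alpha(y^*x) - \psi_\alpha(y^*)\psi_\alpha(x)\bigr) = 0$ for all $x,y \in C(G_1)$.

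Next I would set $\varphi_\alpha := \psi_\alpha \circ \pi_1 \colon C(G) \to M_{k_\alpha}$. Each $\varphi_\alpha$ is UCP, being the composition of the $*$-homomorphism $\pi_1$ (which is UCP, as $\pi_1(1) = 1$ because $\pi_1$ is a surjective morphism of unital $C^*$-algebras) with the UCP map $\psi_\alpha$. Then for $a \in C(G)$,
\[
\lim_\alpha \tr_{k_\alpha}(\varphi_\alpha(a)) = \lim_\alpha \tr_{k_\alpha}\bigl(\psi_\alpha(\pi_1(a))\bigr) = h_{G_1}(\pi_1(a)) = \tau_1(a),
\]
and for $a, b \in C(G)$, writing $x = \pi_1(a)$, $y = \pi_1(b)$, and using that $\pi_1$ is multiplicative and $*$-preserving so that $\pi_1(b^*a) = y^*x$,
\[
\tr_{k_\alpha}\bigl(\varphi_\alpha(b^*a) - \varphi_\alpha(b^*)\varphi_\alpha(a)\bigr) = \tr_{k_\alpha}\bigl(\psi_\alpha(y^*x) - \psi_\alpha(y^*)\psi_\alpha(x)\bigr) \longrightarrow 0.
\]
Thus $(\varphi_\alpha)$ is a factorization net for $\tau_1$, and by Theorem~\ref{amentrace} the trace $\tau_1$ is amenable on $C(G)$.

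The only point needing a word of care — and the nearest thing to an obstacle — is checking that $\tau_1$ is genuinely a trace on $C(G)$; this is immediate since $\pi_1$ is a $*$-homomorphism and $h_{G_1}$ is a trace (here one uses that $G$, hence $G_1$, is of Kac type, as assumed throughout the paper), so $\tau_1(ab) = h_{G_1}(\pi_1(a)\pi_1(b)) = h_{G_1}(\pi_1(b)\pi_1(a)) = \tau_1(ba)$. Everything else is the routine transport of the two limit conditions through the homomorphism $\pi_1$; no analytic subtlety arises because composition with a $*$-homomorphism preserves complete positivity and unitality and commutes with taking adjoints and products.
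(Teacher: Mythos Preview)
Your proof is correct, but it proceeds via a different equivalent characterization than the paper's. The paper verifies condition~(1) of \thmref{amentrace}: it fixes faithful nondegenerate representations $j\colon C(G)\hookrightarrow B(H_G)$ and $j_1\colon C(G_1)\hookrightarrow B(H_{G_1})$, uses Arveson's extension theorem to extend $j_1\pi_1$ to a UCP map $\Phi_1\colon B(H_G)\to B(H_{G_1})$, and then checks that $\tilde h_{G_1}\circ\Phi_1$ is a $j(C(G))$-central state extending $\tau_1$, using that $j(C(G))$ lies in the multiplicative domain of $\Phi_1$. You instead verify condition~(2) directly by composing a factorization net for $h_{G_1}$ with the $*$-homomorphism $\pi_1$. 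Your route is more elementary in that it avoids the Arveson extension theorem and the multiplicative-domain argument; in fact it is precisely the argument the paper itself deploys later in the proof of \thmref{factpU} (see the maps $\varphi_n\circ\pi$ in~(\ref{factmap4U}) and the remark immediately following that theorem, which observes that this argument re-proves the present lemma). The paper's approach, on the other hand, makes the passage through an ambient $B(H)$ explicit and would generalize more readily to situations where the intermediate map is merely UCP rather than a $*$-homomorphism.
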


\begin{proof}
This is essentially Proposition 6.3.5(2)  in \cite{BO}. 
We include a complete proof however as \cite{BO} does not provide an 
explicit proof but leaves it for the reader. 

Let $j: C(G) \hookrightarrow B(H_{G})$ and 
$j_1: C(G_1) \hookrightarrow B(H_{G_1})$  be faithful non-degenerate representations.

Extend $j_1 \pi_1: C(G) \rightarrow B(H_{G_1})$ to UCP maps 
$\Phi_1: B(H_{G})  \rightarrow B(H_{G_1})$ 
using Arveson extension theorem. 
Now consider the amenable extension of $h_{G_1}$ to \\
$\tilde{h}_{G_1}: B(H_{G_1})  \rightarrow {\mathbb C}$. 

Consider the functionals $\tilde{h}_1= \tilde{h}_{G_1} \circ  \Phi_1: B(H_{G})  \rightarrow {\mathbb C}$. 
Evidently $\tilde{h}_1$ is a state on $B(H_G)$. 
For any unitary element $u$ in $j(C(G))$, and $T \in B(H_G)$, 
we compute
\begin{eqnarray*}
\tilde{h}_1 (u^*Tu) 
&=& \tilde{h}_{G_1} \circ  \Phi_1 (u^*Tu) \\ 
&=& \tilde{h}_{G_1} (\Phi_1 (u)^* \Phi_1(T) \Phi_1(u)) \\
&=& \tilde{h}_{G_1} (j_1 \pi_1 (u)^* \Phi_1(T) j_1 \pi_1(u)),
\end{eqnarray*}
where in the second identity above, we use the fact that 
$u$ is in the multiplicative domain of $\Phi_1$ because   $\Phi_1$ is the 
Arveson extension of $j_1 \pi_1$. 
Since $C(G_1)$ is non-degneratedly represented in $B(H_{G_1})$, 
$j_1 \pi_1(u)$ is a unitary element in the image $j_1(C(G_1))$. 
Now the assumption that $\tilde{h}_{G_1}$  is amenable extension of $h_{G_1}$ 
gives 
\begin{eqnarray*}
\tilde{h}_{G_1} (j_1 \pi_1 (u)^* \Phi_1(T) j_1 \pi_1(u)) 
= \tilde{h}_{G_1} ( \Phi_1(T) )= \tilde{h}_1 (T).
\end{eqnarray*}
In addition, for $a \in C(G)$, 
\begin{eqnarray*}
\tilde{h}_1 (a) &=& \tilde{h}_{G_1} ( \Phi_1(a) )  \\
                  &=& \tilde{h}_{G_1} ( \pi_1(a) ) \; \; \; \; (\mbox{$\Phi_1$ is an extension of $\pi_1$})  \\
                  &=&  {h}_{G_1} ( \pi_1(a) ) = \tau_1 (a).
\end{eqnarray*}

This shows that $\tau_1$ is an amenable trace on $C(G)$, since the amenability of trace is independent of any 
faithful representation.
\end{proof}

\begin{lemma}   \label{amenconvol}
Let $\tau_1$ and $\tau_2$ be amenable traces on $C(G)$. Then their convolution  
$\tau_1 \star \tau_2 := (\tau_1 \otimes \tau_2) \Delta_G $ is also an amenable trace. 
\end{lemma}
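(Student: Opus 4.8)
The plan is to prove that the convolution $\tau_1 \star \tau_2 = (\tau_1 \otimes \tau_2)\Delta_G$ is amenable by exhibiting a factorization net for it, using the factorization nets for $\tau_1$ and $\tau_2$ supplied by Theorem~\ref{amentrace}(2). First I would record that $\tau_1 \star \tau_2$ is indeed a tracial state: it is a state because $\Delta_G$ is a unital $*$-homomorphism and $\tau_1 \otimes \tau_2$ is a state on $C(G) \otimes C(G)$, and it is tracial because in a Kac-type CQG the Haar-type/tracial states are invariant under the antipode and $\Delta_G$ is multiplicative, so $(\tau_1 \otimes \tau_2)(\Delta_G(ab)) = (\tau_1 \otimes \tau_2)(\Delta_G(a)\Delta_G(b))$ and one uses that $\tau_1, \tau_2$ are traces on each leg together with the fact that $\Delta_G(a)$ and $\Delta_G(b)$ need not commute but the slices are traces — more carefully, traciality follows since $\tau_i$ being amenable traces are in particular traces, and $(\tau_1\otimes\tau_2)$ is a trace on the minimal tensor product, which is enough once we know $\tau_1\star\tau_2$ factors as $(\tau_1\otimes\tau_2)\circ\Delta_G$ composed appropriately; alternatively this is standard for CQGs of Kac type.

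The heart of the argument is the following. Let $\varphi^{(1)}_m : C(G) \to M_{k_m}$ and $\varphi^{(2)}_n : C(G) \to M_{l_n}$ be factorization nets for $\tau_1$ and $\tau_2$ respectively, so each is UCP, $\tr_{k_m}\varphi^{(1)}_m \to \tau_1$ pointwise, $\tr_{l_n}(\varphi^{(1)}_m(b^*a) - \varphi^{(1)}_m(b^*)\varphi^{(1)}_m(a)) \to 0$, and similarly for the second. Then I would form the composite UCP maps
\begin{equation*}
\psi_{m,n} : C(G) \xrightarrow{\ \Delta_G\ } C(G) \otimes C(G) \xrightarrow{\ \varphi^{(1)}_m \otimes \varphi^{(2)}_n\ } M_{k_m} \otimes M_{l_n} = M_{k_m l_n},
\end{equation*}
directed by the product of the two index sets. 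Each $\psi_{m,n}$ is UCP as a composite of UCP maps ($\Delta_G$ is a unital $*$-homomorphism, hence UCP; the tensor of two UCP maps on the minimal tensor product is UCP). For the trace condition, $\tr_{k_m l_n} = \tr_{k_m} \otimes \tr_{l_n}$, so $\tr_{k_m l_n}(\psi_{m,n}(a)) = (\tr_{k_m}\varphi^{(1)}_m \otimes \tr_{l_n}\varphi^{(2)}_n)(\Delta_G(a)) \to (\tau_1 \otimes \tau_2)(\Delta_G(a)) = (\tau_1 \star \tau_2)(a)$, where the convergence of the slice maps on the algebraic tensor product, applied to the fixed element $\Delta_G(a)$, is elementary.

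The main obstacle, and the step I would spend the most care on, is the approximate multiplicativity of $\psi_{m,n}$: one must show $\tr_{k_m l_n}\big(\psi_{m,n}(b^*a) - \psi_{m,n}(b^*)\psi_{m,n}(a)\big) \to 0$. Writing $\Delta_G(a) = \sum_i a_i' \otimes a_i''$ and $\Delta_G(b) = \sum_j b_j' \otimes b_j''$ (finite sums, since ${\mathcal A}_G$ is a Hopf $*$-algebra and the matrix coefficients span a dense $*$-subalgebra — here one should first reduce to $a, b$ in ${\mathcal A}_G$ by a density/UCP-norm-bound argument, since all the $\psi_{m,n}$ are contractive), one has $\Delta_G(b^*a) = \sum_{i,j} b_j'^* a_i' \otimes b_j''^* a_i''$, so
\begin{equation*}
\psi_{m,n}(b^*a) - \psi_{m,n}(b^*)\psi_{m,n}(a) = \sum_{i,j}\Big(\varphi^{(1)}_m(b_j'^*a_i') \otimes \varphi^{(2)}_n(b_j''^*a_i'') - \varphi^{(1)}_m(b_j'^*)\varphi^{(1)}_m(a_i') \otimes \varphi^{(2)}_n(b_j''^*)\varphi^{(2)}_n(a_i'')\Big).
\end{equation*}
Insert and subtract the cross term $\varphi^{(1)}_m(b_j'^*)\varphi^{(1)}_m(a_i') \otimes \varphi^{(2)}_n(b_j''^*a_i'')$ to split each summand into $\big(\varphi^{(1)}_m(b_j'^*a_i') - \varphi^{(1)}_m(b_j'^*)\varphi^{(1)}_m(a_i')\big) \otimes \varphi^{(2)}_n(b_j''^*a_i'')$ plus $\varphi^{(1)}_m(b_j'^*)\varphi^{(1)}_m(a_i') \otimes \big(\varphi^{(2)}_n(b_j''^*a_i'') - \varphi^{(2)}_n(b_j''^*)\varphi^{(2)}_n(a_i'')\big)$. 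Applying $\tr_{k_m}\otimes\tr_{l_n}$, using $|\tr(XY)| \le \|X\|_2 \|Y\|_2$ in each leg together with the Cauchy–Schwarz / Kadison-type estimate $\tr_{k_m}\big((c - \varphi^{(1)}_m(c))^*(c - \varphi^{(1)}_m(c))\big) \to 0$ type control (equivalently, the standard fact that approximate multiplicativity in trace, combined with UCP, forces the $\varphi$'s to be asymptotically trace-preserving $*$-homomorphisms in the Hilbert–Schmidt sense), each of the finitely many terms tends to $0$. This is exactly the kind of bookkeeping done for free products in Proposition~\ref{fpamentr}; the only new feature here is that the "two legs" are coupled through the single element $a$ (resp.\ $b$) via $\Delta_G$ rather than sitting in a genuine tensor product, but since $\Delta_G(a)$ lands in the \emph{minimal} tensor product and we only ever evaluate against the product net, everything is controlled by the $2$-norm estimates on each factor separately. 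Once both error sums vanish in the limit, $\psi_{m,n}$ is a factorization net for $\tau_1 \star \tau_2$, so by Theorem~\ref{amentrace}(2) this trace is amenable.
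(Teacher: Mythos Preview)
Your proposal is correct and follows essentially the same strategy as the paper: form the candidate factorization net $\psi_{m,n}=(\varphi^{(1)}_m\otimes\varphi^{(2)}_n)\Delta_G$, reduce to $a,b\in{\mathcal A}_G$ so that $\Delta_G(a),\Delta_G(b)$ are finite sums of elementary tensors, and verify the two conditions of \thmref{amentrace}(2). The only cosmetic difference is that the paper first reduces the approximate multiplicativity check to the diagonal case $a=b$ via the Cauchy--Schwarz inequality for the positive semidefinite form $\langle x,y\rangle=\tr(\theta(y^*x)-\theta(y^*)\theta(x))$ and then expands, whereas you handle general $b^*a$ directly with the add--subtract trick; both routes amount to the same finite-sum bookkeeping.
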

\begin{proof}
Given that $\tau_1$ is an amenable trace on $C(G)$, by \thmref{amentrace}, there exists a net
of UCP maps $\{ \varphi_n^1 \}$ such that 
\begin{eqnarray}
& & \varphi_n^1: C(G) \rightarrow M_{k_n} \\
& & \tau_1 = \lim_n {\rm tr}_{k_n} \circ \varphi_n^1  \; \; {\mbox{pointwise}}  \\
& & \lim_n {\rm tr}_{k_n} ( \varphi_n^1(b^* a) - \varphi_n^1 (b)^* \varphi_n^1 (a) )=0, 
\; \; {\mbox{for all $a, b \in C(G)$}}. \label{approxmulass}
\end{eqnarray}
Similarly for amenable trace $\tau_2$ on $C(G)$, there exists a net
of UCP maps $\{ \varphi_m^2 \}$ with the same properties above. 
 
Now the state $\tau_1 \star \tau_2$ on $C(G)$ is defined by 
$$\tau_1 \star \tau_2 = (\tau_1 \otimes \tau_2) \Delta_G,$$
where $\Delta_G$ is the coproduct on $C(G)$. It is trivial to check that 
if $\tau_1$ and $\tau_2$ are traces on $C(G)$, then so is
$\tau_1 \star \tau_2$. 

Consider the net of 
UCP maps $\varphi_{(n,m)}$ defined by 
\begin{eqnarray*}
\varphi_{(n,m)} = (\varphi_n^1 \otimes \varphi_m^2) \Delta_G: 
C(G) \rightarrow M_{k_n} \otimes  M_{k_m} \cong M_{k_nk_m}. 
\end{eqnarray*}
We claim that the $\varphi_{(n,m)}$'s are a factorization net for $\tau_1 \star \tau_2$. 
We proceed by checking the two conditions in \thmref{amentrace}(2).

For $a \in {\mathcal A}$, 
let $\displaystyle \Delta_G(a) = \sum_{i} a_{1i} \otimes a_{2i}$, which is a finite sum by property of 
the Hopf algebra ${\mathcal A}$. Then 
$$
\varphi_{(n,m)}(a) = \sum_i \varphi_n^1 (a_{1i}) \otimes \varphi_m^2 (a_{2i})
$$
$$
({\rm tr}_{k_n} \otimes {\rm tr}_{k_m}) \varphi_{(n,m)}(a)
 = \sum {\rm tr}_{k_n} \varphi_n^1 (a_{1i}) {\rm tr}_{k_m} \varphi_m^2 (a_{2i}).
$$
Since $ \varphi_n^1$ (resp. $\varphi_m^2$) is a factorization net for $\tau_1$ (resp. $\tau_2$), we have 
\begin{eqnarray*}
& & \lim_{(n,m)} ({\rm tr}_{k_n} \otimes {\rm tr}_{k_m}) \varphi_{(n,m)}(a) 
= \lim_{(n,m)} \sum {\rm tr}_{k_n} \varphi_n^1 (a_{1i}) {\rm tr}_{k_m} \varphi_m^2 (a_{2i}) \\
&=&   \sum \tau_1 (a_{1i}) \tau_2 (a_{2i}) = (\tau_1 \otimes \tau_2) \Delta_G(a) \\
&=& \tau_1 \star \tau_2 (a)
\end{eqnarray*}
Since ${\mathcal A}$ is dense in $C(G)$, the above holds for all $a \in C(G)$.

Next, we check the approximate multiplicativity condition of $\varphi_{(n,m)}$ 
in \thmref{amentrace}(2), i.e.,  
\begin{eqnarray} \label{multab}
({\rm tr}_{k_n} \otimes {\rm tr}_{k_m}) (\varphi_{(n,m)}(b^*a) - \varphi_{(n,m)}(b^*)  \varphi_{(n,m)}(a) ) \rightarrow 0
\end{eqnarray}
for $a, b \in C(G)$. Note that it is in fact enough to check only 
\begin{eqnarray} \label{multb}
({\rm tr}_{k_n} \otimes {\rm tr}_{k_m}) (\varphi_{(n,m)}(b^*b) - \varphi_{(n,m)}(b^*)  \varphi_{(n,m)}(b) ) \rightarrow 0
\end{eqnarray}
for $b \in C(G)$. 

To see this,  any UCP map $\theta: A \rightarrow M_k$ from a $C^*$-algebra $A$  
gives rise to a positive semi-definite form on $A$ defined by 
\begin{eqnarray}
 \langle  x, y \rangle  := {\rm tr} (\theta(y^*x) - \theta(y^*)\theta(x)), \; \; x, y \in A.
\end{eqnarray}

Hence this satisfies the Cauchy-Schwarz inequality 
\begin{eqnarray}
| \langle  x,y \rangle | \leq \| x \| \| y \|, \; \;  \| x \| :=  \langle  x , x  \rangle ^{\frac{1}{2}}.
\end{eqnarray}


Applying this with $x=a, \; y=b$ for the corresponding UCP's above, we see (\ref{multab}) follows 
from (\ref{multb}). 

To check (\ref{multb}), 
let $b \in {\mathcal A}$ and  
 $\displaystyle \Delta_G(b) = \sum_{i} b_{1i} \otimes b_{2i}$ (a finite sum). 
Then
\begin{eqnarray*}
\Delta(b^*b) &=& \Delta(b^*) \Delta(b) \\
&=& \sum_{i, j} b^*_{1i} b_{1j} \otimes  b^*_{2i} b_{2j}.
\end{eqnarray*}
Hence, 
$$\varphi_{(n,m)}(b^*b) = \sum_{i, j} \varphi_n^1(b^*_{1i} b_{1j}) \otimes  \varphi_m^2(b^*_{2i} b_{2j}).$$  
 Also,  
\begin{eqnarray*}
\varphi_{(n,m)}(b^*) \varphi_{(n,m)}(b) &=& \sum_{i} 
\varphi_n^1(b_{1i})^* \otimes  \varphi_m^2(b_{2i})^* \sum_{j} \varphi_n^1(b_{1j}) \otimes  \varphi_m^2(b_{2j}) \\
&=&  \sum_{i, j} \varphi_n^1(b_{1i})^* \varphi_n^1(b_{1j}) \otimes  \varphi_m^2(b_{2i})^* \varphi_m^2(b_{2j})  
\end{eqnarray*}
Combining the above,  
\begin{eqnarray*}
& & ({\rm tr}_{k_n} \otimes {\rm tr}_{k_m}) (\varphi_{(n,m)}(b^*b) - \varphi_{(n,m)}(b)^*  \varphi_{(n,m)}(b) ) \\
&=&   ({\rm tr}_{k_n} \otimes {\rm tr}_{k_m}) \sum_{i, j} \varphi_n^1(b^*_{1i} b_{1j}) \otimes  \varphi_m^2(b^*_{2i} b_{2j}) \\
& & - ({\rm tr}_{k_n} \otimes {\rm tr}_{k_m}) 
\sum_{i, j} \varphi_n^1(b_{1i})^* \varphi_n^1(b_{1j}) \otimes  \varphi_m^2(b_{2i})^* \varphi_m^2(b_{2j})  \\
&=&   \sum_{i, j} {\rm tr}_{k_n} ( \varphi_n^1(b^*_{1i} b_{1j}) )  {\rm tr}_{k_m} ( \varphi_m^2(b^*_{2i} b_{2j}))  \\ 
& & -  \sum_{i, j} {\rm tr}_{k_n} ( \varphi_n^1(b_{1i})^* \varphi_n^1(b_{1j})  {\rm tr}_{k_m} 
( \varphi_m^2(b_{2i})^* \varphi_m^2(b_{2j}) )  \; \; \; \; \; \;  
(*) 
\end{eqnarray*}

Applying the assumption in (\ref{approxmulass}) for $\varphi_n^1$ and a similar one  for $\varphi_m^2$ 
 to the last expression  ($*$) 
for the computation of 
$({\rm tr}_{k_n} \otimes {\rm tr}_{k_m}) (\varphi_{(n,m)}(b^*b) - \varphi_{(n,m)}(b)^*  \varphi_{(n,m)}(b) )$ 
above, we obtain 
\begin{eqnarray*}
& & \lim_{(n,m)}   \sum_{i, j} 
[{\rm tr}_{k_n} ( \varphi_n^1(b^*_{1i} b_{1j}) )  {\rm tr}_{k_m} ( \varphi_m^2(b^*_{2i} b_{2j}))  \\
& & -  \sum_{i, j} {\rm tr}_{k_n} 
( \varphi_n^1(b_{1i})^* \varphi_n^1(b_{1j})  {\rm tr}_{k_m} ( \varphi_m^2(b_{2i})^* \varphi_m^2(b_{2j}) )]  
=0.
\end{eqnarray*}
Therefore we have shown that for $b \in {\mathcal A}_{G}$, 
$$\lim_{(n,m)} ({\rm tr}_{k_n} \otimes {\rm tr}_{k_m}) 
(\varphi_{(n,m)}(b^*b) - \varphi_{(n,m)}(b)^*  \varphi_{(n,m)}(b) ) = 0.$$
This continues to hold for any $b \in C(G)$ by the density of ${\mathcal A}_{G}$ in $C(G)$. 
\end{proof}

\begin{theorem} \label{amenGeneratedQG}
Let $G$ be a CQG with tracial Haar state $h_G$. 
Let $G_1$ and $G_2$ be quantum subgroups of $G$ with tracial Haar states $h_{G_1}$ and $h_{G_2}$. 
Further assume that $h_{G_1}$ and $h_{G_2}$ are amenable traces. Then 
$h_G$ is amenable if $G =  \langle  G_1, G_2 \rangle $.
\end{theorem}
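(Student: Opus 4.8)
The plan is to exhibit $h_G$ as a weak$^*$-limit of amenable traces on $C(G)$ and then invoke the fact that the amenable traces form a weak$^*$-closed set. Set $\tau_i = h_{G_i} \circ \pi_i$ for $i = 1, 2$. First I would apply \lmref{amenquotient} to each quantum subgroup: since $h_{G_1}$ and $h_{G_2}$ are amenable, $\tau_1$ and $\tau_2$ are amenable traces on $C(G)$.

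Next, using \lmref{amenconvol}, the convolution $\sigma := \tau_1 \star \tau_2$ is again an amenable trace on $C(G)$ --- it is a state and a trace, since convolutions of states are states and convolutions of traces are traces, as recorded in the proof of \lmref{amenconvol}. Iterating \lmref{amenconvol}, applied to the pair $(\sigma^{\star(k-1)}, \sigma)$ of tracial states, I get by induction on $k$ that every convolution power $\sigma^{\star k} = (\tau_1 \star \tau_2)^{\star k}$ is an amenable trace on $C(G)$.

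Now the hypothesis $G = \langle G_1, G_2 \rangle$ enters through \propref{GeneratedQG}, which gives $h_G = \lim_{k \to \infty} (\tau_1 \star \tau_2)^{\star k}$ pointwise on $C(G)$; this is precisely convergence in the weak$^*$ topology on the tracial state space. Since $h_G$ is a tracial state by hypothesis and the amenable traces form a weak$^*$-closed set in the tracial state space by \propref{amentrc}, we conclude that $h_G$ is amenable.

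The argument is short because the real work sits in \lmref{amenquotient} and \lmref{amenconvol}; the only points needing a moment's care are that the iterated convolutions $\sigma^{\star k}$ stay tracial states, so that \lmref{amenconvol} may be reapplied, and that ``pointwise convergence on $C(G)$'' in \propref{GeneratedQG} coincides with weak$^*$ convergence, so that \propref{amentrc} applies verbatim. Alternatively, one could bypass \propref{amentrc} by extracting, via \thmref{amentrace}(2), a factorization net for each $\sigma^{\star k}$ and then diagonalizing over $k$ and over the indexing nets to produce a single factorization net for $h_G$; but the closedness argument is cleaner.
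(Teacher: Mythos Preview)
Your proof is correct and follows essentially the same route as the paper's: pull back amenability via \lmref{amenquotient}, iterate \lmref{amenconvol} to get amenability of each $(\tau_1 \star \tau_2)^{\star k}$, then use \propref{GeneratedQG} together with the weak$^*$-closedness of amenable traces (\propref{amentrc}) to conclude. You are in fact a bit more explicit than the paper about why the induction on $k$ is legitimate (the convolution powers remain tracial states), and your observation that \propref{GeneratedQG} already gives pointwise convergence on all of $C(G)$ lets you skip the separate density step the paper writes out.
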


\begin{proof}
From \propref{GeneratedQG}(3), we have
$$
h_G = \lim_{k \rightarrow \infty} (\tau_1 \star \tau_2)^{\star k}
$$
pointwise on ${\mathcal A}_G$,
where each $\tau_i = h_{G_i}\pi_i$ ($i=1, \; 2$) is amenable by  
\lmref{amenquotient}. 
By 
 \lmref{amenconvol}
all traces of the form $(\tau_1 \star \tau_2)^{\star k}$  on $C(G)$ are amenable. 
Therefore, on the dense subalgebra ${\mathcal A}_G$, $h_G$ is approximated by 
amenable traces pointwise. Usual density arguments extend it to the full algebra 
$C(G)$. Hence $h_G$ is a weak*-limit of amenable traces and is therefore an amenable 
traces by \propref{amentrc}. 
\end{proof}

\section{Kirchberg's factorization property for \\ 
discrete quantum groups $\widehat{U_n^+}$ and $\widehat{O_n^+}$, $n \neq 3$}

In this section, we show that  for $n \neq 3$, the discrete quantum group duals $\widehat{U_n^+}$ and $\widehat{O_n^+}$ 
have Kirchberg's factorization property (we will simply call this the factorization property). 

Recall that a for a compact quantum group $G$
with tracial Haar 
measure $h_G$, the discrete dual $\hat{G}$ has the factorization property if and 
only if $h_G$ is amenable. We will show that  $h_G$ is amenable when $G$ is one of 
${U_n^+}$ and ${O_n^+}$ and $n \neq 3$.

\begin{lemma} \label{amenO_2^+}
$\widehat{O_2^+}$ has the factorization property.
\end{lemma}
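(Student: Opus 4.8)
To prove \lmref{amenO_2^+}, the plan is to deduce amenability of the Haar trace $h$ on $O_2^+$ from the \emph{co-amenability} of this particular quantum group. First I would recall the classical fact, going back to Banica \cite{Banica96a, Banica97a}, that $O_2^+$ is co-amenable; equivalently, the discrete quantum group $\widehat{O_2^+}$ is amenable, and the universal and reduced $C^*$-algebras of $O_2^+$ coincide. (This can be seen, for instance, from Banica's identification $A_o(2) \cong C(SU_{-1}(2))$ together with the amenability of the discrete dual of every $SU_q(2)$; or from a Kesten-type amenability criterion applied to the $SU(2)$-fusion ring of $O_2^+$ equipped with its classical dimension function.)

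Next I would translate co-amenability into the one statement I actually need: the von Neumann algebra $L^\infty(O_2^+) := \lambda(C(O_2^+))''$, acting on the GNS space $\ell^2(\widehat{O_2^+}) = L^2(C(O_2^+), h)$, is injective (equivalently, since it is a finite von Neumann algebra, hyperfinite). Indeed, amenability of a discrete quantum group is well known to be equivalent to injectivity of the associated group von Neumann algebra.

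Finally I would conclude directly from condition (1) of \thmref{amentrace}. Injectivity of $L^\infty(O_2^+) \subseteq B(\ell^2(\widehat{O_2^+}))$ furnishes a conditional expectation $E\colon B(\ell^2(\widehat{O_2^+})) \to L^\infty(O_2^+)$; composing $E$ with the unique normal tracial extension of $h$ to $L^\infty(O_2^+)$ yields a state $\psi$ on $B(\ell^2(\widehat{O_2^+}))$ extending $h$, and $\psi$ is $C(O_2^+)$-central because $E$ is $L^\infty(O_2^+)$-bimodular and the extended $h$ is a trace. Thus $h$ satisfies condition (1) of \thmref{amentrace}, so $h$ is amenable, which by definition means $\widehat{O_2^+}$ has the factorization property. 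Alternatively, one may simply invoke the general principle that every trace on a nuclear $C^*$-algebra is amenable \cite[Section 6.3]{BO}, applied to the nuclear $C^*$-algebra $C(O_2^+)$.

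Since no estimates are involved, there is no real obstacle here; the only point requiring care is the standing convention that $C(G)$ always carries the \emph{universal} norm, so that one genuinely needs co-amenability — not merely amenability of the trace on the reduced algebra — to move freely between the universal picture and the injective von Neumann algebra $L^\infty(O_2^+)$.
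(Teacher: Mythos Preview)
Your proposal is correct. In fact, the paper's entire proof is precisely the one-line alternative you mention at the end: since $C(O_2^+)$ is nuclear, every trace on it is amenable.

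Your main argument, via co-amenability $\Rightarrow$ injectivity of $L^\infty(O_2^+)$ $\Rightarrow$ existence of a $C(O_2^+)$-central state extension of $h$, is a valid and more self-contained route: it exhibits explicitly which condition of \thmref{amentrace} is being verified, and your remark about needing co-amenability so that $\lambda$ is faithful on the \emph{universal} algebra is the right subtlety to flag. The paper's proof simply packages all of this into the single word ``nuclear'' (co-amenability of $O_2^+$ being what makes the universal $C^*$-algebra nuclear), trading transparency for brevity. Either way the content is the same; your alternative sentence already \emph{is} the paper's proof.
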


\begin{proof}
Since $C(O_2^+)$ is a nuclear $C^*$-algebra, every trace on  it is amenable. 
\end{proof}

\begin{lemma} \label{amenO_2^+fp}
$\widehat{O_2^+} \, {*} \, \widehat{O_2^+}$ has the factorization property.
\end{lemma}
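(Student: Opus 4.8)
The plan is to reduce this to \propref{fpamentr} (free products of amenable traces are amenable) together with \lmref{amenO_2^+}. The one fact that needs to be pinned down is that the Haar trace on the free product compact quantum group $O_2^+ \hat{*} O_2^+$ is exactly the free product of the Haar traces of the two copies of $O_2^+$.

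First I would recall that, by the construction preceding \thmref{GeneratedevenO}, $C(\widehat{O_2^+} * \widehat{O_2^+}) = C(O_2^+ \hat{*} O_2^+) = C(O_2^+) * C(O_2^+)$, the (full) unital free product of $C^*$-algebras. Next, the Haar state $h$ of this CQG equals the free product state $h_{O_2^+} * h_{O_2^+}$: this is part of Wang's theory of free products of compact quantum groups (cf. \cite{free}), since the free product of the Haar states is a state on $C(O_2^+) * C(O_2^+)$ that is invariant under the free product coproduct, and the Haar state is the unique invariant state. In particular $h$ is a trace, being the free product of the traces $h_{O_2^+}$ (each $O_2^+$ is of Kac type, so $h_{O_2^+}$ is tracial, and the free product of traces is again a trace).

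Then I would invoke \lmref{amenO_2^+}: since $C(O_2^+)$ is nuclear, its Haar trace $h_{O_2^+}$ is amenable. Applying \propref{fpamentr} with $A_1 = A_2 = C(O_2^+)$ and $\tau_1 = \tau_2 = h_{O_2^+}$, the free product trace $h_{O_2^+} * h_{O_2^+}$ on $C(O_2^+) * C(O_2^+)$ is amenable. Since this free product trace is precisely the Haar trace $h$ of $O_2^+ \hat{*} O_2^+$ identified in the previous step, $h$ is amenable, which by the criterion recalled at the start of this section means that $\widehat{O_2^+} * \widehat{O_2^+}$ has the factorization property.

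The main obstacle is the (standard, but not completely formal) identification of the Haar state of the free product quantum group with the free product of the Haar states, and checking that the "free product trace" appearing in \propref{fpamentr} (in the sense of Brown--Dykema \cite{BD04} and Ozawa \cite{Oza04}) coincides with it; both are the canonical tracial state on $C(O_2^+) * C(O_2^+)$ determined by the marginal states together with freeness, so they agree, and the full-versus-reduced distinction is immaterial by the remark following \thmref{GeneratedevenO}.
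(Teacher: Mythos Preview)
Your proof is correct and follows essentially the same route as the paper: identify $C(O_2^+ \hat{*} O_2^+) = C(O_2^+) * C(O_2^+)$, note (via \cite{free}) that its Haar state is the free product $h_{O_2^+} * h_{O_2^+}$, which is a trace, and then combine \lmref{amenO_2^+} with \propref{fpamentr}. The paper additionally cites \cite{Avitz82} for the fact that the free product of traces is a trace, but otherwise the arguments coincide.
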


\begin{proof}
The statement of the lemma means 
the Haar state  on $C(O_2^+) {*} C(O_2^+)$ is an amenable trace on account 
of $C(O_2^+) {*} C(O_2^+) = C^*(\widehat{O_2^+} \, {*} \, \widehat{O_2^+})$.

By \cite{free}, the Haar state  on $C(O_2^+) {*} C(O_2^+)$ is the free product 
 $h_{O_2^+} * h_{O_2^+}$, which is a trace by \cite{Avitz82} since $h_{O_2^+}$ is one. 
 By \propref{fpamentr} along with the above lemma,  $h_{O_2^+} * h_{O_2^+}$ is amenable.
\end{proof}

\begin{theorem} \label{factpO}
$\widehat{O_n^+}$ has the factorization property for $n \neq 3$.
\end{theorem}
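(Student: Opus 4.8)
The plan is to prove the amenability of the Haar trace $h_{O_n^+}$ by induction on $n$, treating the cases $n=2$, even $n$, and odd $n\ge 5$ separately and feeding all of them into \thmref{amenGeneratedQG}. The base case $n=2$ is \lmref{amenO_2^+}, and $\widehat{O_2^+}*\widehat{O_2^+}$ is handled by \lmref{amenO_2^+fp}. For the inductive step I would split according to parity.

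\textbf{Even case.} Suppose $n=2m$ with $m\ge 2$. By \thmref{GeneratedevenO}, $O_{2m}^+ = \langle S_{2m},\, O_m^+ \hat{*} O_m^+\rangle$. The permutation group $S_{2m}$ is a finite (hence amenable) group, so $C(S_{2m})$ is nuclear and its Haar trace is amenable. For the other generating subgroup, the Haar trace on $C(O_m^+)*C(O_m^+) = C(O_m^+\hat{*}O_m^+)$ is the free product $h_{O_m^+}*h_{O_m^+}$ (by \cite{free}), which is a trace by \cite{Avitz82}; by the inductive hypothesis $h_{O_m^+}$ is amenable (note $m<2m$ and $m\ge 2$, and if $m=3$ one instead uses that $\widehat{O_3^+}$ is being avoided — actually $m$ could be $3$ when $n=6$, so I must be careful here: for $n=6$ I would instead use \thmref{GeneratedO} below). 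Granting $h_{O_m^+}$ amenable, \propref{fpamentr} gives that $h_{O_m^+}*h_{O_m^+}$ is amenable. Then \thmref{amenGeneratedQG} applies to conclude $h_{O_{2m}^+}$ is amenable.

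\textbf{Odd case and the $n=6$ subtlety.} For odd $n\ge 5$, and more generally for any $n\ge 4$ (in particular $n=6$), I would use \thmref{GeneratedO}(2): for linearly independent $\xi_1,\xi_2\in S^{n-1}$ we have $O_n^+ = \langle O_{n-1}^{+,\xi_1},\, O_{n-1}^{+,\xi_2}\rangle$, and each $O_{n-1}^{+,\xi_i}\cong O_{n-1}^+$ has amenable Haar trace by the inductive hypothesis as long as $n-1\neq 3$, i.e. $n\neq 4$. So for $n\ge 5$ this works directly. The case $n=4$ needs \thmref{GeneratedO}(1) instead: $O_4^+ = \langle O_4,\, O_3^{+,\xi}\rangle$ fails since $O_3^+$ is excluded, so I would instead use the even decomposition $O_4^+=\langle S_4, O_2^+\hat{*}O_2^+\rangle$ from \thmref{GeneratedevenO} with $m=2$, where $h_{O_2^+}*h_{O_2^+}$ is amenable by \lmref{amenO_2^+fp} and $C(S_4)$ is nuclear; \thmref{amenGeneratedQG} then finishes $n=4$. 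Organizing the induction cleanly so that every $O_{n-1}^+$ invoked has $n-1\neq 3$ — meaning the induction should really start from the amenable base cases $n=2,4,5$ and climb, using the even decomposition or \thmref{GeneratedO}(2) as appropriate — is the main bookkeeping obstacle; the analytic content is entirely contained in the cited results.

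In summary, I would (i) establish $n=2,4,5$ as base cases ($n=2$ nuclear; $n=4$ via $S_4$ and $O_2^+\hat{*}O_2^+$; $n=5$ via two copies of $O_4^{+,\xi}$); (ii) for even $n=2m\ge 6$ use $\langle S_{2m}, O_m^+\hat{*}O_m^+\rangle$ with \propref{fpamentr} and the inductive hypothesis on $O_m^+$; (iii) for odd $n\ge 7$ use $\langle O_{n-1}^{+,\xi_1}, O_{n-1}^{+,\xi_2}\rangle$ with the inductive hypothesis on $O_{n-1}^+$; and in every step invoke \thmref{amenGeneratedQG} to pass amenability of the Haar traces of the generating quantum subgroups to $h_{O_n^+}$. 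The one genuine obstacle is ensuring the recursion never calls back to the excluded $\widehat{O_3^+}$, which is precisely why the statement carries the hypothesis $n\neq 3$ and why the induction must be seeded at $n=4$ and $n=5$ rather than at $n=3$.
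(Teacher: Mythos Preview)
Your overall strategy---induction on $n$ via \thmref{amenGeneratedQG}, with $n=2$ and $n=4$ as base cases---is exactly the paper's, and the argument is correct once the bookkeeping is straightened out. The difference is that the paper handles \emph{every} $n>4$ uniformly via \thmref{GeneratedO}(1), writing $O_n^+=\langle O_n,\,O_{n-1}^{+,\xi}\rangle$ with the \emph{classical} orthogonal group $O_n$ as one of the generators; since $C(O_n)$ is commutative (hence nuclear), its Haar trace is automatically amenable, and the induction reduces to a single step $n-1\mapsto n$ with no parity split at all.

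Your route is more tangled because you use \thmref{GeneratedevenO} for even $n$ and \thmref{GeneratedO}(2) for odd $n$. This forces you to worry about $m=n/2$ rather than $n-1$, which is exactly what creates the $n=6$ snag you flagged: your summary item (ii) would call $O_3^+$. You patch this by routing $n=6$ through \thmref{GeneratedO}(2) instead, which is fine, but then item (ii) should really read ``even $n\ge 8$'' (or be dropped entirely). In fact, once you have $n=4$ you can simply run \thmref{GeneratedO}---either part---for all $n\ge 5$ and never touch \thmref{GeneratedevenO} again; the even decomposition is only needed at $n=4$, precisely because \thmref{GeneratedO} would otherwise invoke $O_3^+$. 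That is what the paper does, and it eliminates all of your case analysis.
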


\begin{proof}
The case  $n=2$ is \lmref{amenO_2^+}.  

For $n=4$, by \thmref{GeneratedevenO}, we have 
$$
O_4^+ =   \langle  {S}_4, O_2^+ \hat{*} O_2^+ \rangle 
$$
where ${S}_4$ is the ordinary permutation group. 
Since the commutative algebra $C(S_4)$ is nuclear (so its Haar state  is amenable), by combining 
\lmref{amenO_2^+fp} 
and \thmref{amenGeneratedQG}, 
we deduce that 
$\widehat{O_4^+}$ has the factorization property. 

For $n>4$, by \thmref{GeneratedO}
$$
O_n^+ =  \langle  O_n, O_{n-1}^{+, \, \xi}  \rangle , \; \; \xi \in S^{n-1},
$$
where $ S^{n-1}$ is the unit sphere in ${\mathbb R}^n$ centered at the origin.
As $O_{n-1}^{+, \, \xi} \cong O_{n-1}^{+}$ for all $\xi$ and $O_n$ is a 
ordinary compact group, by using 
\thmref{amenGeneratedQG}  
again and induction we conclude that $\widehat{O_n^+}$ has the factorization property.
\end{proof}

\begin{theorem}  \label{factpU}
$\widehat{U_n^+}$ has the factorization property for $n \neq 3$.
\end{theorem}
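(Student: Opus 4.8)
The plan is to deduce the statement for $U_n^+$ from the case of $O_n^+$ already established in \thmref{factpO}, using the fact that $U_n^+$ is the \emph{free complexification} of $O_n^+$. Let $v=(v_{ij})$ denote the fundamental corepresentation of $O_n^+$ and let $z$ be the canonical unitary generator of $C(\TT)=C^*(\ZZ)$. Viewing $w=(z\,v_{ij})$ as an element of $M_n\bigl(C(\TT)*C(O_n^+)\bigr)$, one checks directly that $w\bar w^t=I_n=\bar w^t w$ and $w^t\bar w=I_n=\bar w w^t$, i.e.\ that $w$ and $\bar w$ satisfy exactly the defining relations of $C(U_n^+)$; moreover $\Delta(z\,v_{ij})=\sum_k z\,v_{ik}\otimes z\,v_{kj}$, so $u_{ij}\mapsto z\,v_{ij}$ defines a morphism of Hopf $*$-algebras from ${\mathcal A}_{U_n^+}$ into ${\mathcal A}_{\TT\hat{*}O_n^+}$. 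By a theorem of Banica \cite{Banica97a} this morphism is an isomorphism of ${\mathcal A}_{U_n^+}$ onto the sub-Hopf-$*$-algebra generated by the $z\,v_{ij}$, and it identifies the Haar state of $U_n^+$ with the restriction of the free-product Haar state $h_{\TT}*h_{O_n^+}$ (the Haar state of $\TT\hat{*}O_n^+$, cf.\ \cite{free}). Passing to universal $C^*$-completions, I obtain a unital $*$-homomorphism $\Phi\colon C(U_n^+)\to C(\TT)*C(O_n^+)$ with $h_{U_n^+}=(h_{\TT}*h_{O_n^+})\circ\Phi$.

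Granting this, the conclusion follows from the permanence properties recalled in Section~\ref{prelim}. First, $C(\TT)$ is commutative, hence nuclear, so that every tracial state on it---in particular $h_{\TT}$---is amenable. Next, for $n\neq 3$ the Haar trace $h_{O_n^+}$ is amenable by \thmref{factpO} (and for $n=1$ this is trivial, $C(O_1^+)$ being finite-dimensional). By \propref{fpamentr}, the free product $h_{\TT}*h_{O_n^+}$ is therefore an amenable trace on $C(\TT)*C(O_n^+)$. Finally, the pullback of an amenable trace along a unital $*$-homomorphism is again amenable: if $\{\varphi_k\colon C(\TT)*C(O_n^+)\to M_{m_k}\}$ is a factorization net for $h_{\TT}*h_{O_n^+}$ as in \thmref{amentrace}(2), then $\{\varphi_k\circ\Phi\}$ is a factorization net for $(h_{\TT}*h_{O_n^+})\circ\Phi=h_{U_n^+}$, since both the trace-convergence and the approximate-multiplicativity estimates for $\varphi_k\circ\Phi$ on $C(U_n^+)$ are instances of the corresponding estimates for $\varphi_k$. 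Hence $h_{U_n^+}$ is amenable, which by definition means $\widehat{U_n^+}$ has the factorization property for $n\neq 3$.

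The only substantive ingredient here is the free-complexification description of $U_n^+$ together with the compatibility of the two Haar states; everything else is soft. The point of that ingredient is precisely that it exhibits the otherwise hard-to-analyze Haar trace of $U_n^+$ as a pullback of the Haar trace of $C(\TT)*C(O_n^+)$, which we control via \thmref{factpO} and the free-product and restriction properties from Section~\ref{prelim}; note in particular that this realizes $\widehat{U_n^+}$ directly, with no separate induction, from the single input \thmref{factpO}. As an alternative one could stay within the framework of Section 3: the maps $u_{ij}\mapsto u_{ij}$ and $u_{ij}\mapsto\delta_{ij}z$ realize $O_n^+$ and $\TT$ as quantum subgroups of $U_n^+$, and one has $U_n^+=\langle O_n^+,\TT\rangle$ in the sense of Brannan--Collins--Vergnioux, whence \thmref{amenGeneratedQG} applies directly once $h_{O_n^+}$ and $h_{\TT}$ are known to be amenable; but establishing this generation identity amounts to the same free-complexification analysis of the representation category of $U_n^+$, so the essential difficulty is unchanged.
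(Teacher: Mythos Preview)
Your proof is correct and follows essentially the same route as the paper: both use Banica's free-complexification morphism $u_{ij}\mapsto z\,v_{ij}$ to write $h_{U_n^+}=(h_{\TT}*h_{O_n^+})\circ\Phi$, invoke \propref{fpamentr} together with \thmref{factpO} to see that $h_{\TT}*h_{O_n^+}$ is amenable, and then pull back a factorization net along $\Phi$. Your closing remark about the alternative via $U_n^+=\langle O_n^+,\TT\rangle$ and \thmref{amenGeneratedQG} is not in the paper, but as you note it reduces to the same representation-theoretic input.
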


\begin{proof}
In \cite{Banica97a}, Banica showed that the morphism of Woronowicz $C^*$-algebras 
\begin{eqnarray}
\pi: C({U_n^+}) \rightarrow C({\mathbb T}) * C({O_n^+}), \; \;
\pi(u_{ij}) = za_{ij}
\end{eqnarray}
 restricts to an injection of Hopf $*$-algebras 
\begin{eqnarray}
\pi: 
{\mathcal A}_{U_n^+} \hookrightarrow {\mathcal A}_{\mathbb T} * {\mathcal A}_{O_n^+},   
\end{eqnarray}
where we use $a_{ij}$ to denote the generators of $C({O_n^+})$. 
By property of the Haar state (see formula (\ref{Haarid})), 
the Haar state  $h_{U_n^+}$ on ${\mathcal A}_{U_n^+}$ 
is the restriction of the Haar state  $h_{\mathbb T} * h_{O_n^+}$
on ${\mathcal A}_{{\mathbb T}} * {\mathcal A}_{O_n^+}$:  
\begin{eqnarray} \label{decomposeHaar}
h_{U_n^+} = (h_{\mathbb T} * h_{O_n^+}) \circ \pi.
\end{eqnarray}
By density of ${\mathcal A}_{U_n^+}$ in $C({U_n^+})$, (\ref{decomposeHaar}) is still valid on 
$C({U_n^+})$. 
By \propref{fpamentr} and  \thmref{factpO}, 
$h_{\mathbb T} * h_{O_n^+}$ is amenable. Hence there exists a net of UCP maps 
$\varphi_n : C({\mathbb T}) * C(O_n^+) \rightarrow M_{k_n}$ with factorization property. 
Consider the maps 
\begin{eqnarray} \label{factmap4U}
\varphi_n \circ \pi: 
C({U_n^+}) \rightarrow M_{k_n}.  
\end{eqnarray}

It is easy to see that, 
$$ \lim_{n \rightarrow \infty} {\rm tr}_{k_n} (\varphi_n \pi (ab) - \varphi_n \pi(a) \varphi_n \pi(b) )  = 0, $$
for $a, b \in C({U_n^+})$, where ${\rm tr}_{k_n}$ is the normalized trace on $M_{k_n}$. 

We also have that 
$$
\lim_{n \rightarrow \infty} {\rm tr}_{k_n} \varphi_n \pi (a) = (h_{\mathbb T} * h_{O_n^+})(\pi(a)) = h_{U_n^+}(a)
$$
for $a \in {\mathcal A}_{U_n^+}$.  
The last limit obviously extends to all 
$a \in C({U_n^+})$ by the usual density argument. 

This proves that $h_{U_n^+}$ is amenable.
\end{proof}

\begin{remark}
By virtue of Theorem 6.2.7 in \cite{BO},  the exact same argument after  (\ref{decomposeHaar}) 
above proves Proposition 6.3.5(2)  in \cite{BO} again, as the proof of 
\lmref{amenquotient} does. 
Namely, if $J \triangleleft A$ is a closed ideal and $\tau$ is an amenable trace on $A/J$, then the 
induced trace $\tau \pi$ on $A$ is also amenable, where $\pi: A \rightarrow A/J$. 
\end{remark}

\begin{problem}
Just as the main results in \cite{Chir15} and \cite{BCV15}, 
Theorems \ref{factpO} and \ref{factpU} excludes the case $n=3$.  
However, we believe that all these results should equally hold for $n=3$. 
To prove that, we may need to develop new tools or a different method of proof. 
\end{problem}

\begin{problem}
For other quantum groups of Kac type such as the quantum automorphism groups in 
\cite{qsym,Banica99a}, it would be interesting to determine if they have the factorization property, 
though we have the following obvious result because $C(S^+_4)$ is nuclear \cite{BanMor07}, 
$C(S^+_4)$ being the notation of Banica {\sl et al.} for $A_{aut}(X_4)$ in \cite{qsym}. 
\end{problem}

\begin{proposition}
The discrete dual $\widehat{S^+_4}$ of the quantum permutation group ${S}^+_4$  
has the factorization property.
\end{proposition}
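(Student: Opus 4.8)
The plan is to reduce the statement to the fact, cited just above, that $C(S^+_4)$ is a nuclear $C^*$-algebra. Indeed, $\widehat{S^+_4}$ has the factorization property precisely when the Haar trace $h_{S^+_4}$ on $C(S^+_4)$ is amenable, by the definition recalled in Section~\ref{prelim}. So the whole content of the proof is: \emph{every trace on a nuclear $C^*$-algebra is amenable}. This is exactly the argument already used in \lmref{amenO_2^+}, and it is the only step needed here.

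First I would recall why this reduction is legitimate: by \cite[Banica99a]{qsym,Banica99a} the quantum permutation group $S^+_4$ is of Kac type, so its Haar state $h_{S^+_4}$ is a trace, and hence the factorization property of $\widehat{S^+_4}$ is by definition equivalent to amenability of $h_{S^+_4}$. Next I would invoke nuclearity of $C(S^+_4)$, established by Banica and Moroianu in \cite{BanMor07} (in the notation $A_{aut}(X_4)$ of \cite{qsym}). Finally, I would apply the standard fact that for a unital nuclear $C^*$-algebra $A$ and any trace $\tau$ on $A$, condition~(3) of \thmref{amentrace} holds automatically: since $A$ is nuclear, $A \otimes_{\rm alg} A^{\rm op}$ admits only one $C^*$-norm, so the maximal and minimal tensor norms coincide and the representation $(\pi_\tau \cdot \pi_\tau^{\rm op})_{\rm alg}$, which is always continuous for the maximal norm, is automatically continuous for the minimal norm. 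Hence $\tau = h_{S^+_4}$ is amenable, and $\widehat{S^+_4}$ has the factorization property.

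There is essentially no obstacle here; the proposition is a one-line consequence of nuclearity of $C(S^+_4)$ together with the elementary observation about traces on nuclear $C^*$-algebras, and it is included only for completeness as noted in the preceding problem. The only point requiring a citation rather than a computation is the nuclearity of $C(S^+_4)$ itself, which is the substantive input supplied by \cite{BanMor07}; everything downstream of that is formal.
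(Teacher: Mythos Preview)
Your proposal is correct and matches the paper's own justification: the proposition is stated as an ``obvious result because $C(S^+_4)$ is nuclear,'' and you supply exactly that argument (nuclearity of $C(S^+_4)$ from \cite{BanMor07} plus the fact, also used in \lmref{amenO_2^+}, that every trace on a nuclear $C^*$-algebra is amenable). There is nothing to add.
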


In forthcoming work, we will generalize Kirchberg's factorization property to 
non-Kac type discrete quantum groups.

\bigskip
\noindent
{\bf Acknowledgments.}
The authors thank the referees for careful reading of the manuscript.

\end{document}